\newtheorem{theorem}{Theorem}
\newtheorem{proposition}{Proposition}
\newtheorem{lemma}{Lemma}
\newcommand{\R}{{\mathbb R}}
\newcommand{\Z}{{\mathbb Z}}
\newcommand{\C}{{\mathbb C}}
\newcommand{\set}[2]{ \left\{ #1 \ \left| \ #2 \right. \right\} }
\newcommand{\lleq}{< \! \! <}
\newcommand{\ggeq}{> \! \! >}
\newcommand{\ind}{{\bf 1}}
\author{Yen Do}
\thanks{Y. Do was partially supported by NSF grant DMS-1201456}
\author{Philip T. Gressman}
\thanks{P. T. Gressman was partially supported by NSF grant DMS-1101393 and an Alfred P. Sloan Foundation Fellowship.}
\title[An operator van der Corput estimate]{An operator van der Corput estimate arising from oscillatory Riemann-Hilbert problems}
\begin{document}
\begin{abstract}
We study an operator analogue of the classical problem of finding the rate of decay of an oscillatory integral on the real line.   This particular problem arose in the analysis of oscillatory Riemann--Hilbert problems associated with partial differential equations in the Ablowitz-Kaup-Newell-Segur hierarchy, but is interesting in its own right as a question in harmonic analysis and oscillatory integrals.  As was the case in earlier work of the first author \cite{do2011}, the approach is general and purely real-variable.  The resulting estimates we achieve are strongly uniform as a function of the phase and can simultaneously accommodate phases with low regularity (as low as $C^{1,\alpha}$), local singularities, and essentially arbitrary sets of stationary points that degenerate to finite or infinite order.
\end{abstract}
\maketitle

\section{Introduction}
In this paper, we consider an operator analogue of the classical problem of finding the rate of decay for a oscillatory integral on the real line. This problem arose from \cite{varzugin1996} and previous work of the first author in \cite{do2011} where a real-variable approach akin to the classical stationary phase method was developed for oscillatory Riemann-Hilbert problems (RHPs). Oscillatory RHPs, in turn, play central roles in the study of long-time asymptotics for solutions of partial differential equations in the Ablowitz-Kaup-Newell-Segur (AKNS) hierarchy in the same way oscillatory integrals are important for linear PDEs, see e.g. \cite{bc1984} or \cite{dz1993}.  Before stating the specific problem and the main results, we include a brief historical discussion of the context where oscillatory RHPs appear.

It is a classical fact that solutions of the standard linear partial differential equations (PDEs) with constant  coefficients can be written as oscillatory integrals in the form
$$\int e^{it\phi(\xi)}f(\xi)d\xi.$$
Using this representation  long time asymptotics of these solutions can be obtained via the method of stationary phase. Here $f(\xi)$ is the Fourier transform of the corresponding initial data of the PDE, and $\phi$ is a polynomial that encodes the linear structure of the PDE and  the particular point $(x,t)$ where the long-time asymptotics is being investigated. In the classical approach, the first step in the analysis of oscillatory integrals is to use integration by parts to localize the integral to neighborhoods of stationary points $\{\phi'(\xi)=0\}$, and the second step is to reduce the phase to an appropriate  Taylor polynomial near each stationary point. Once the phase is polynomial, contour deformation can be used to extract the leading asymptotics; this final step is part of the method of steepest descent, which is an alternative to stationary phase for oscillatory integrals with analytic phases.  An alternate approach is provided by {van der Corput} estimates for oscillatory integrals, which, for each $\phi$ and $f$ with some regularity and decay, allow one to find the largest constant $\gamma>0$ such that
$$\left|\int e^{it\phi(\xi)}f(\xi)d\xi \right| = O(t^{-\gamma}).$$
In the end, the regularity and decay required of $f$ to deduce long-time asymptotics can be transferred to corresponding decay and regularity requirements for the initial data via the classical Riemann-Lebesgue correspondence principle. Note that van der Corput estimates immediately provide decay estimates for oscillatory integrals \emph{without} the need to perform the sequence of three steps mentioned above.  The reader not familiar with van der Corput estimates will find a thorough treatment in Stein \cite{steinsi}.

In the study of long-time asymptotics for a large class of one-dimensional nonlinear PDEs, one encounters nonlinear variants of oscillatory integrals. These are equations in the Ablowitz-Kaup-Newell-Segur (AKNS) hierarchy of completely integrable equations (which includes in particular NLS and mKdV); see e.g. \cites{bc1984,bdt1988}. Here, instead of $fe^{it\phi}$, one will have a matrix-valued function $G$ and some oscillatory entries, and, instead of an integral, one computes the residue(s) at $z=\infty$ of properly normalized multiplicative Riemann-Hilbert factors of $G$.\footnote{In a multiplicative Riemann-Hilbert factorization $G=G_-G_+$, we require $G_+$ to have analytic continuation to the upper half plane and $G_-$ to have analytic continuation to the lower half plane, and we require $G_\pm$ to have some normalization at infinity.} Note that if we instead used \emph{additive} Riemann-Hilbert components of $G$, the residues at $z=\infty$ will be some scalar multiple of the usual integral of $G$.\footnote{To see this, suppose $g$ is Schwartz. Modulo normalization the desired decomposition would be
$g= C_+g  - C_-g$
where $C_+$ and $C_-$ are Fourier multiplier operators with symbols $\ind_{\xi>0}$ and $-\ind_{\xi<0}$, or, equivalently, (up to some constant factors) the nontangential limits of the Cauchy transform
$$Cf(z) = \int_{-\infty}^\infty \frac {f(x)}{x-z}dx.$$
It is clear that the integral $\int g(x)dx$ shows up in the residues at $z=\infty$ of both components.} The task of finding long time asymptotics for solutions of these PDEs reduces to the task of analyzing long-time behavior for oscillatory Riemann-Hilbert problems. In this direction, various approaches have been developed: 
\begin{itemize}
\item In the seminal work \cite{dz1993}, P. Deift and X. Zhou introduced a nonlinear analogue of the  method of steepest descent and obtained long-time asymptotics for solutions of AKNS integrable equations (see also a previous work by Its \cite{its1981}).

\item Deift-Venakides-Zhou \cite{dvz1997} extended the method of Deift and Zhou to oscillatory RHPs with more delicate settings; see also \cite{dkmvz1999}. 

\item The methodology of Deift and Zhou requires the phase to be analytic.  K. McLaughlin and P. Miller \cites{mm2006,mm2008} extended Deift-Zhou's methodology to a method of $\overline{\partial}$-steepest descent, and consequently were able to handle phases with two Lipschitz derivatives (or, equivalently, three locally bounded derivatives).
\end{itemize}

The first work that uses the real-variable approach towards oscillatory Riemann-Hilbert problems is due to Varzugin \cite{varzugin1996} under the assumption that the phase $\phi$ has only primary stationary points. In \cite{do2011},  the first author further developed \cite{varzugin1996}, removing the above restriction on $\phi$ and, at the same time, obtaining some slight improvements over the two Lipschitz assumptions of \cites{mm2006, mm2008}. Assuming the phase has a finite number of stationary points, the argument obtained in \cite{do2011} is  analogous to the method of stationary phase for linear oscillatory integrals: 
\begin{itemize}
\item One starts with a localization argument (that is reminiscent of  integration by parts in linear theory) to reduce $G$ to a small neighborhood of the set of stationary points.  

\item After separating the local contributions  (to the nonlinear integral) of stationary points,  one approximates the phase by an appropriate polynomial near each stationary point. 

\item Once the phase is polynomial, one essentially appeals to the steepest descent methodology of Deift and Zhou to find the leading asymptotics of the nonlinear oscillatory integral. 
\end{itemize}

Recall that for classical oscillatory integrals the first two reductions are made possible thanks to integration-by-parts or van der Corput estimates. In the argument in \cite{do2011}, the first two steps use the following key estimate: if $C_-$ is the Fourier multiplier operator with symbol $\ind_{\xi<0}$ and if $\phi'\ge 0$ on the support of $f$, then
\begin{equation}\label{opvanderCorput}\|C_-(fe^{it\phi})\|_p = O(t^{-\gamma}), \ \ 2\le p\le \infty,
\end{equation}
where (given sufficient regularity of $f$ and $\phi$ and sufficient decay of $f$) the decay order $\gamma$ will depend on $p$ and the order of vanishing of $f$ at the stationary points $\{\phi'=0\}$.\footnote{By symmetry, an analogous statement holds for $C_+$, the Fourier multiplier operator with symbol $\ind_{\xi>0}$.} Furthermore, just as classical van der Corput estimates  immediately imply decay estimates for oscillatory integrals and, hence, solutions of linear PDEs  (without the need to go through the whole sequence of the stationary phase method), establishing \eqref{opvanderCorput} will also lead to decay estimates for AKNS nonlinear oscillatory integrals (and hence for solutions of nonlinear PDEs in the AKNS hierarchy) without the need to execute the full sequence of steps in the nonlinear stationary phase argument in \cite{do2011}. 

In the applications of \eqref{opvanderCorput} to the nonlinear stationary phase argument in \cite{do2011}, (modulo auxiliary factors) $f$ will be some nonlinear Fourier transform of the initial data of the underlying PDE. Using a nonlinear Riemann-Lebesgue transference principle of X. Zhou \cite{zhou1998}, one may transfer the decay and regularity required by \eqref{opvanderCorput} to requirements on the initial data of the equation. This is similar to the classical situation where the decay/regularity required by van der Corput estimates for oscillatory integrals could be transferred to conditions on the initial data.

On the applied side, \eqref{opvanderCorput} is related to a well-known phenomenon in the signal processing community. If we use the identity
$$C_- = \frac 1 2 (I- H)$$
where $I$ is the identity operator and $H$ is the Hilbert transform, it follows that the Hilbert transform of a bump function that oscillates with positive frequencies is similarly oscillatory. If the bump function oscillates with negative frequency only, we may use the $C_+$ analogue of \eqref{opvanderCorput}, which gives decay for $C_+$, to establish a similar fact. The fact that the Hilbert transform of a wavelet-like function (meaning a function with vanishing moments, decay, and regularity) is also wavelet-like is well-known 
throughout the signal processing community, see e.g. \cite{cu2011}, and one might view operator van der Corput estimates of the type \eqref{opvanderCorput} as more quantitative estimates for this phenomenon.

Our aim in this paper is to further investigate the operator van der Corput estimates \eqref{opvanderCorput} in the following direction: for each given phase $\phi$ and a given desired rate of decay $\gamma$ we would like to obtain sharp conditions on $f$ so that the estimate \eqref{opvanderCorput} holds. The versions of \eqref{opvanderCorput} obtained in \cite{do2011} require fairly stringent decay assumptions on $f$ and its  derivatives, and at the same time do not allow for fractional derivatives, which would be wasteful in potential applications to PDEs. These issues are addressed in the main results of this paper. Furthermore, the restriction $p\ge 2$ in \eqref{opvanderCorput} will be removed.

It is likely that the estimates in this paper will lead to improved estimates for the error terms in the long-time asymptotics for solutions of some AKNS equations with rough initial data;  however, this issue will not be explored in this paper. We plan to revisit this direction as well as other applications to oscillatory RHPs (such as in \cites{mm2006, mm2008}, see also \cites{ll2008,lubinsky2009} for  related work) in future work. We would like to point out that P. Deift and X. Zhou \cite{dz2003} have obtained essentially sharp estimates for the error terms in the long-time asymptotics of solutions of the NLS equation with initial data in a weighted Sobolev space; see also an important application of  \cite{dz2003} in \cite{dz2002}.

Throughout this paper,  the notation $A \lesssim B$ will mean that there is a finite constant $C$ such that $A \leq CB$ holds uniformly in the parameters of $A$ and $B$ (which will be explicitly noted when it is not clear).  Modifying the symbol $\lesssim$ with subscripts (i.e., $\lesssim_j$) indicates that the constant will depend on the value of the subscript.  Certain numbers and functions (e.g., $\kappa$ and $\alpha$ in what follows) will be explicitly identified as fixed, and so implicit constants will be allowed to depend on these fixed constants without further notice.  We define $A \gtrsim B$ and  $A \approx B$ similarly.  Finally, sentences of the form ``$A \lleq B$ implies $X$'' mean that there is a nonzero positive constant $c$, independent of the parameters of $A$ and $B$, such that whenever $A \leq c B$ is true for this constant $c$, ``$X$'' holds. (Typically such $c$ is a sufficiently small constant.)

We first state a particular case of our main results, formulated in the situation when the phase $\phi$ is polynomial. (This is the case for AKNS PDEs). Below $\mathcal L_p^\gamma$ denotes the fractional Sobolev seminorm
$$\|f\|_{\mathcal L_p^\gamma}:=  \|D^\gamma f\|_p$$
here $D^{\gamma}$ is the fractional derivative operator
$$\widehat{D^{\gamma}f}(\xi) = \widehat{f}(\xi) |\xi|^\gamma \ \ .$$

\begin{theorem}\label{t.polynomial} Let $\phi$ be a real polynomial of degree $k\ge 2$, and let $\alpha_1,\dots, \alpha_{k-1}$ be the (complex) zeros of $\phi'$. Then for any $1<p<\infty$ and any $\gamma>0$ it holds that
$$\|C_-(e^{it\phi}f\ind_{\phi'>0})\|_p \lesssim_{k,\gamma,p} t^{-\gamma} \Big[\Big\|\frac{f}{(\phi'\ell)^\gamma}\Big\|_{L^p} + \Big\|\frac{f}{(\phi')^\gamma}\Big\|_{\mathcal L_p^\gamma} \Big],$$
where $\ell$ is given by
$$\ell(x):=\min_{1\le j\le k-1} |x-\alpha_j| .$$
\end{theorem}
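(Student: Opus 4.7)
Since the right-hand side decomposes as a weighted $L^p$ norm plus a fractional Sobolev seminorm of $f/(\phi')^\gamma$, I would split $C_-(fe^{it\phi}\ind_{\phi'>0})$ into matching pieces via the commutator identity
$$ C_-(fe^{it\phi}\ind_{\phi'>0}) = f\cdot C_-(e^{it\phi}\ind_{\phi'>0}) + [C_-,M_f]\bigl(e^{it\phi}\ind_{\phi'>0}\bigr). $$
The first summand carries the ``zeroth-order'' contribution — the pointwise size of $f$ multiplied by a purely scalar oscillatory integral — and the commutator carries the ``regularity'' of $f$. The two terms on the right-hand side of the theorem then correspond to these two pieces respectively.

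\medskip

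For the scalar factor I would aim for a pointwise bound of the form
$$ \bigl|C_-(e^{it\phi}\ind_{\phi'>0})(x)\bigr| \lesssim_{k,\gamma}\bigl(t\,\phi'(x)\,\ell(x)\bigr)^{-\gamma}. $$
This is a real-variable van der Corput-type estimate for the principal value integral of $(y-x-i0)^{-1}\,e^{it\phi(y)}\ind_{\phi'(y)>0}$ in $y$: since $\phi'>0$, one can (fractionally) integrate by parts $\gamma$ times, each time dividing by $t\phi'(y)$ and picking up a derivative of $(y-x)^{-1}$. The singularity of the Cauchy kernel at $y=x$ is responsible for the extra factor $\ell(x)^{-\gamma}$: up to scale $\ell(x)$ around $x$ the geometry of $\phi'$ is essentially constant, while on larger scales further (real or complex) zeros of $\phi'$ enter and weaken the gain. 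Multiplying by $f$ and taking $L^p$ norms yields the first summand $\|f/(\phi'\ell)^\gamma\|_{L^p}$.

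\medskip

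For the Calderón commutator
$$ [C_-,M_f]\bigl(e^{it\phi}\ind_{\phi'>0}\bigr)(x)=\frac{-i}{2\pi}\,\mathrm{p.v.}\!\int\frac{f(x)-f(y)}{x-y}\,e^{it\phi(y)}\ind_{\phi'(y)>0}\,dy, $$
I would perform a symbolic fractional integration by parts built on the heuristic $D^\gamma e^{it\phi}\approx(t\phi')^\gamma e^{it\phi}$, which is legitimate here because $\phi'>0$ ensures $e^{it\phi}$ has positive local frequency. Trading a factor $(t\phi')^\gamma$ on the amplitude for a $D^\gamma$ on it — implemented rigorously through a Kato--Ponce-type fractional Leibniz rule, inside a Littlewood--Paley decomposition adapted to dyadic scales of $\phi'$ and $\ell$, together with the $L^p$-boundedness of $C_-$ — produces the second summand $\|f/(\phi')^\gamma\|_{\mathcal L_p^\gamma}$ with the desired $t^{-\gamma}$ gain.

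\medskip

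The main obstacle is the fractional symbolic calculus: the approximation $D^\gamma e^{it\phi}=(t\phi')^\gamma e^{it\phi}+(\text{error})$ leaves remainders involving higher derivatives of $\phi$ that scale differently near and far from the zeros of $\phi'$, and the clean two-term right-hand side of the theorem is exactly what one gets after bundling these leftovers into the $\ell^{-\gamma}$ weight in the first summand. A secondary technical point is extending from $p=2$ (where Plancherel would make the whole computation transparent) to the full range $1<p<\infty$; for this I would combine the $L^p$-boundedness of the Hilbert transform with Littlewood--Paley square-function bounds to sum the dyadic pieces uniformly in $p$.
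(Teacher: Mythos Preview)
Your commutator decomposition does not work as stated, and the failure is already visible in the first term. The pointwise bound
\[
\bigl|C_-(e^{it\phi}\ind_{\phi'>0})(x)\bigr|\lesssim_{k,\gamma}\bigl(t\,\phi'(x)\,\ell(x)\bigr)^{-\gamma}
\]
is false for $\gamma>1/2$. Take $\phi(y)=y^2$, so $\Omega=(0,\infty)$ and $\ell(x)=x$. Since $C_-$ commutes with positive dilations, $C_-(e^{ity^2}\ind_{y>0})(x_0)=(C_-h)(\sqrt{t}\,x_0)$ with $h(u)=e^{iu^2}\ind_{u>0}$ fixed. One computes $\widehat h(0^-)=\int_0^\infty e^{iv^2}\,dv\neq 0$, so the negative-frequency restriction $\widehat h\,\ind_{\xi<0}$ has a jump at $\xi=0$ and hence $(C_-h)(z)\sim c/z$ as $z\to+\infty$. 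Thus $C_-(e^{ity^2}\ind_{y>0})(x_0)\sim c/(\sqrt{t}\,x_0)=(tx_0^2)^{-1/2}$, which cannot be bounded by $(tx_0^2)^{-\gamma}$ for $\gamma>1/2$. The culprit is precisely the sharp cutoff $\ind_{\phi'>0}$: the boundary $\{\phi'=0\}$ is a stationary point of $\phi$, so your ``fractional integration by parts dividing by $t\phi'(y)$'' is illegal there, and the resulting boundary contribution limits the decay to a fixed finite order. The two pieces of your split are therefore \emph{individually larger} than the terms you want to match them to; whatever cancellation makes the theorem true is destroyed by the commutator identity.

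There is also a structural mismatch in the second piece: the commutator $[C_-,M_f]$ is governed by the regularity of $f$, whereas the theorem's second term is $\|D^\gamma(f/(\phi')^\gamma)\|_p$. Your heuristic $D^\gamma e^{it\phi}\approx (t\phi')^\gamma e^{it\phi}$ does not convert one into the other without producing exactly the kind of boundary error terms that already broke the first piece.

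The paper's route is different in a way that matters. It never separates $f$ from the oscillatory factor; instead it sets $F:=f/(\phi')^\gamma$ from the outset and, using a smooth partition of unity $\{\psi_I\}$ adapted to the scale $\ell$, splits $F=F_I+F^I$ on each interval $I$ into parts with frequency below and above the local phase frequency $\Lambda_I\approx\phi'|_I$. The product $E_I F_I$ (with $E_I$ a band-limited approximation to $e^{i\phi}(\phi')^\gamma\psi_I$ built in Lemma~\ref{ibplem}) has Fourier support on $[0,\infty)$ and is annihilated by $C_-$; the approximation error yields the weighted $L^p$ term, and the high-frequency remainder $F^I$ yields the $X^p_\gamma$ (and hence $\mathcal L^p_\gamma$) term. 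The smoothness of $\psi_I$ is exactly what your sharp cutoff lacks, and the presence of $F$ inside every piece is what prevents the boundary from contributing.
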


Theorem \ref{t.polynomial} is, 
in some sense, analogous to a weighted version of the sharp van der Corput's lemma for polynomials developed by Phong and Stein \cite{ps1992}. (Variations of this idea appear in work of Phong, Stein, and Sturm \cite{pss1999} and, more recently, in work of Wright \cite{wright2011}.)  The main difference is the phrasing of the estimate in terms of the distance function $\ell$ rather than in terms of root clusters. The approach used here happens to be convenient because it allows us to deduce Theorem~\ref{t.polynomial} as a corollary of more general theorems which can simultaneously accommodate phases with such features as low regularity (as low as $C^{1,\alpha}$), local singularities, or essentially arbitrary sets of stationary points that may degenerate to finite or infinite order.   Below we describe the setup for these results.

Let $\Omega$ be an open subset of $\R$ which could be unbounded. For a nonnegative integer $\kappa$ and  $\alpha \in [0,1]$ we will assume that the following hold:
\begin{itemize}
\item The phase $\phi$ is real valued on $\Omega$ and $\phi'>0$ on $\Omega$. 
\item The phase $\phi$ is also locally $C^{\kappa +1,\alpha}$ in $\Omega$, i.e., $\phi^{(\kappa+1)}$ exists and is locally H\"older continuous in $\Omega$ with exponent $\alpha$ (the implicit constant is not required to be uniform over $\Omega$).
\end{itemize} 
For such $\phi$ and $(\kappa,\alpha)$, we say that a function  $\ell$ defined on $\Omega$ is \emph{admissible} if: 
\begin{itemize}
\item The function $\ell$ vanishes on $\partial \Omega$ (a trivial constraint when $\Omega = \mathbb R$) and is nonnegative and $1$-Lipschitz, i.e.,
$$|\ell(x)-\ell(y)| \le |x-y| \qquad \forall \, x,y\in \Omega.$$
\item For any $x\in \Omega$ and any $1\le j\le \kappa$,
\begin{equation}\label{nphase1}
|\phi^{(j+1)}(x)| (\ell(x))^j \lesssim |\phi'(x)| \qquad \forall \, x\in \Omega.
\end{equation}
\item Uniformly over $x,y\in \Omega$, the inequality $|y-x| \lleq \ell(x)$ implies
\begin{equation}\label{nphase2}
\frac{|\phi^{(\kappa+1)}(x) - \phi^{(\kappa+1)}(y)|}{|x-y|^\alpha} (\ell(x))^{\kappa+\alpha} \lesssim |\phi'(x)|.
\end{equation}
\end{itemize}
An informal interpretation of \eqref{nphase1} and \eqref{nphase2} is that differentiation of $\phi'$ at $x$ is no worse than multiplication by $(\ell(x))^{-1}$. (Note that \eqref{nphase1} is not applicable if $\kappa=0$.) Alternatively, \eqref{nphase1} and \eqref{nphase2} can be viewed as pointwise upper bounds on $\ell$.  

We also introduce some designer spaces to measure regularity of functions on $\Omega$.  For any $p \in (1,\infty)$ and any $\beta \in (0,1)$, consider the following seminorm
\[ ||f||_{X^p_{\beta}(\Omega)} := \left( \int_{\Omega} \left| \sup_{0 < \delta < \ell(x)} \frac{1}{\delta^{1+\beta}} \int_{-\frac{\delta}{2}}^{\frac{\delta}{2}} |f(x + u) - f(x)| du \right|^p dx \right)^{\frac{1}{p}}. \]
When $\beta = 0$, we let $||f||_{X^p_0} := ||f||_{L^p}$, and when $\beta \geq 1$, we let $||f||_{X^p_\beta} := ||f^{(j)}||_{X^p_{\beta-j}}$, where $j$ is the largest integer less than or equal to $\beta$.
We note that the class of functions with finite $X_\beta^p$-seminorm includes the Bessel potential fractional Sobolev spaces ${{\mathscr L}^p_\beta}$, the Riesz potential Sobolev spaces ${\mathcal L}^p_{\gamma}$ already considered, and more exotic Lipschitz-type variants such as ${\Lambda_\beta^{p,p}}$ (for definitions of these spaces, see Stein \cite{steinsi}).  

We are now ready to state the main generalizations of Theorem~\ref{t.polynomial}:
\begin{theorem}\label{flashythm2}
Assume that  $\ell$ is admissible.  Fix $p \in (1,\infty)$  and let $\gamma:=\kappa + \alpha$.
Then for any $t > 0$  it holds that
\begin{equation} \label{wanted2}
||C_{-} ( e^{i t \phi} f\ind_\Omega ) ||_{L^p(\R)} \lesssim_{\kappa,\alpha} \frac{1}{t^{\gamma}} \left[  \left| \left| \frac{f}{(\phi' \ell)^\gamma} \right| \right|_{L^p(\Omega)} + \left| \left|   \frac{f}{(\phi')^\gamma}  \right| \right|_{X^p_\gamma (\Omega)}  \right] .
\end{equation}
\end{theorem}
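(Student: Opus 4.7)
My plan is to perform a Whitney-type decomposition of $\Omega$ adapted to the scale function $\ell$ and to exploit the fact that, on each Whitney piece, the phase $\phi$ is essentially linear with a large positive slope $\omega_k \sim \phi'(x_k)$, so that $e^{it\phi}$ is a high-frequency positive modulation which is nearly annihilated by $C_-$ with quantifiable error. Concretely, using that $\ell$ is $1$-Lipschitz and vanishes on $\partial \Omega$, I would partition $\Omega$ into intervals $\{I_k\}$ (with centers $x_k$) such that $\ell(x) \approx |I_k|$ on a fixed enlargement of $I_k$, and take a smooth partition of unity $\{\psi_k\}$ subordinate to this cover, writing $f = \sum_k f_k$ with $f_k := f\psi_k$. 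On $\mathrm{supp}\,\psi_k$, the admissibility hypotheses \eqref{nphase1}--\eqref{nphase2} guarantee that $\phi'$ is comparable to $\omega_k$ and that the Taylor remainder $R_k(x) := \phi(x) - \phi(x_k) - \phi'(x_k)(x-x_k)$ obeys scale-invariant bounds of the form $|R_k^{(j)}(x)| \lesssim \omega_k \, \ell(x_k)^{1-j}$ for $0 \leq j \leq \kappa+1$ (and a H\"older variant at the top level).

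The next step is to reduce each term to a modulated piece via
\[
C_{-}(e^{it\phi}f_k) = e^{it\phi(x_k)} \, C_{-}\!\bigl(e^{it\phi'(x_k)(\cdot - x_k)} g_k\bigr), \qquad g_k := e^{itR_k}f_k,
\]
and to prove a local estimate of the form
\[
\bigl\|C_{-}\!\bigl(e^{i\omega_k(\cdot - x_k)} g_k\bigr)\bigr\|_{L^p(\R)} \lesssim \omega_k^{-\gamma}\Bigl[\, \ell(x_k)^{-\gamma}\|g_k\|_{L^p} + \|g_k\|_{X^p_\gamma}\Bigr].
\]
This is the van der Corput heart of the argument: in Fourier variables, $C_{-}(e^{i\omega_k \cdot}g_k)$ is supported where $\xi < 0$ and equals a translate of $\widehat{g_k}$ by $\omega_k > 0$, so the tail decay of $\widehat{g_k}$ (quantified by $\gamma$ derivatives in the $X^p_\gamma$ sense, which measures smoothness precisely at scales below $\ell(x_k)$) yields the claimed power of $\omega_k$. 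To obtain fractional $\gamma$ I would interpolate between integer cases via a Littlewood--Paley/molecular decomposition, or equivalently via a direct $T1$-type argument on the relevant kernel truncation, taking care that the endpoint $p \in (1,\infty)$ with $p < 2$ is covered.

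After this, one transfers regularity of $f_k$ to $g_k$: despite $t|R_k|$ being potentially very large, the derivative bounds on $R_k$ provide exactly one factor of $\omega_k^{-1}$ per derivative in $X^p_\gamma$, so $\|g_k\|_{X^p_\gamma} \lesssim (\omega_k \ell(x_k))^\gamma \|f_k\|_{X^p_\gamma} + \ell(x_k)^{-\gamma}(\omega_k\ell(x_k))^\gamma\|f_k\|_{L^p}$, which combined with the previous step and $\omega_k = \phi'(x_k)$ converts the local estimate into the form
\[
\|C_{-}(e^{it\phi}f_k)\|_{L^p} \lesssim t^{-\gamma}\Bigl[\|f_k/(\phi'\ell)^\gamma\|_{L^p(I_k)} + \|f_k/(\phi')^\gamma\|_{X^p_\gamma(I_k)}\Bigr].
\]

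The final and most delicate step is summing over $k$ into a single $L^p$ bound. Since the kernel of $C_-$ has long-range $1/x$ decay, the pieces $C_-(e^{it\phi}f_k)$ have nonnegligible tails outside $I_k$, so one cannot sum $L^p$ norms naively. The key observation is almost-orthogonality: each $C_-(e^{it\phi}f_k)$ is essentially frequency-localized near $\omega_k = t\phi'(x_k)$ with a molecular tail controlled by the regularity gain already proven. I would organize the sum by a Littlewood--Paley decomposition in frequency (grouping Whitney intervals by the dyadic size of $\omega_k$), prove a Carleson-/sparse-type disjoint-support estimate within each dyadic shell using the spatial separation of $I_k$'s with comparable $\omega_k$, and then sum across shells using the rapid off-diagonal decay of the molecules. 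This almost-orthogonality argument -- making $p<2$ work without Plancherel -- together with verifying that the summed contributions reproduce the global $X^p_\gamma$ seminorm on the right-hand side, is where I expect the main technical obstacle to lie.
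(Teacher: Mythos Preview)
Your proposal contains a genuine gap at the ``transfer regularity from $f_k$ to $g_k$'' step, and it is fatal rather than cosmetic. Writing $g_k = e^{itR_k}f_k$ with $R_k(x)=\phi(x)-\phi(x_k)-\phi'(x_k)(x-x_k)$, the derivative bounds you quote are correct: $|R_k^{(j)}|\lesssim \omega_k\,\ell(x_k)^{1-j}$. But this means differentiating $e^{itR_k}$ costs a factor of $t\omega_k$ per derivative (not $\omega_k^{-1}$), and your displayed bound $\|g_k\|_{X^p_\gamma}\lesssim (\omega_k\ell_k)^\gamma\|f_k\|_{X^p_\gamma}+\ell_k^{-\gamma}(\omega_k\ell_k)^\gamma\|f_k\|_{L^p}$ reflects exactly this \emph{loss}. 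Combine it with your local van der Corput inequality and you obtain (at $t=1$)
\[
\|C_-(e^{i\phi}f_k)\|_{L^p}\lesssim (\omega_k\ell_k)^{-\gamma}\|f_k\|_{L^p}+\ell_k^{\gamma}\|f_k\|_{X^p_\gamma}+\|f_k\|_{L^p},
\]
whose last two terms are $(\omega_k\ell_k)^\gamma$ times larger than the target $\omega_k^{-\gamma}\|f_k\|_{X^p_\gamma}$ and $(\omega_k\ell_k)^{-\gamma}\|f_k\|_{L^p}$. The claimed local estimate in your display simply does not follow from the preceding line; the arithmetic hides a complete cancellation of the decay. The underlying reason is that $R_k'=\phi'-\phi'(x_k)$ is of size $\omega_k$ near the edges of $I_k$, so $e^{itR_k}$ oscillates at frequency $\sim t\omega_k$ there and destroys exactly the smoothness you need.

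The paper avoids this by \emph{not} linearizing the phase. Its key lemma (Lemma~\ref{ibplem}) shows directly that $(\phi')^\gamma e^{i\phi}\psi_I$ is, up to an error of size $(\Lambda_I|I|)^{-\gamma}$ with rapid tails, a function $E_I$ whose Fourier transform is supported on $[\tfrac{1}{4\pi}\Lambda_I,\infty)$; the full nonlinear phase is handled inside that integration-by-parts lemma, so no transfer step is needed. The paper then works with $F:=f/(\phi')^\gamma$, splits $F=F_I+F^I$ into low/high frequency at scale $\Lambda_I$, and writes $e^{i\phi}f=S_1+S_2+S_3+S_4$. The main term $S_4=\sum E_IF_I$ has Fourier support in $(0,\infty)$ and is annihilated by $C_-$ \emph{identically}; this removes the summation obstacle you flag at the end. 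The error $S_2$ has rapid spatial decay and sums via the Hardy--Littlewood maximal function plus a geometric lemma, while $S_3$ (the high-frequency part of $F$) is made spatially local by truncating the Littlewood--Paley kernel to $|u|\le c|I|$, so its $L^p$ norm is a disjoint $\ell^p$ sum controlled directly by $\|F\|_{X^p_\gamma}$. No almost-orthogonality argument in frequency is needed.
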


\begin{theorem}\label{flashythm} Assume that $\ell$ is admissible.   Fix $p\in (1,\infty)$ and let $\gamma:=\kappa + \alpha$.
Then for any integer $k\ge \gamma$  and $t>0$ it holds that
\begin{equation} \label{wanted}
\|C_-(e^{it\phi}f\ind_{\Omega})\|_p \lesssim  \frac{1}{t^{\gamma}} \left[  \left| \left| \frac{f}{(\phi' \ell)^{\gamma}} \right| \right|_{L^p(\Omega)} + \left| \left|\frac{\ell^{k  - \frac{1}{p} + \frac{1}{q}}}{\ell^{\gamma}} \frac{d^k}{dx^k} \left( \frac{f}{(\phi')^\gamma}  \right) \right| \right|_{L^q(\Omega)}  \right] ,
\end{equation}
where  $q\in [1,p]$ is any index such that
$$\frac 1q \le \frac 1 p + \gamma - k\ \ ,$$
and the implicit constant may depend on $k$, $\kappa$, $\alpha$, $p$, and $q$.
\end{theorem}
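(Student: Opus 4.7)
The plan is to derive Theorem~\ref{flashythm} directly from Theorem~\ref{flashythm2} by establishing a scale-sensitive Sobolev-type embedding that relates the $X^p_\gamma$-seminorm to a weighted $L^q$-norm of the $k$-th derivative. Setting $g := f/(\phi')^\gamma$, the key inequality to prove is
\[
\|g\|_{X^p_\gamma(\Omega)} \lesssim \Big\|\frac{g}{\ell^\gamma}\Big\|_{L^p(\Omega)} + \Big\|\frac{\ell^{k-1/p+1/q}}{\ell^\gamma}\,g^{(k)}\Big\|_{L^q(\Omega)},
\]
after which one simply invokes Theorem~\ref{flashythm2} applied to $f$ to conclude \eqref{wanted}.

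To establish the embedding I would first perform a Whitney-type decomposition of $\Omega$ adapted to $\ell$. Since $\ell$ is $1$-Lipschitz and vanishes on $\partial\Omega$, there is a bounded-overlap cover $\{I_j\}$ of $\Omega$ by intervals with $\ell(x) \approx r_j := |I_j|$ for $x \in I_j$, and moderate dilates $I_j^* \subset \Omega$ still enjoy bounded overlap. Writing $\gamma = \kappa + \alpha$ with $\kappa$ a nonnegative integer and $\alpha \in [0,1)$, the definition of $X^p_\gamma$ reduces the seminorm to $\|g^{(\kappa)}\|_{X^p_\alpha}$; moreover, for $x \in I_j$ the constraint $0 < \delta < \ell(x) \lesssim r_j$ restricts the inner supremum in the $X^p_\alpha$-seminorm to scales on which $g^{(\kappa)}(x+u)$ lives in $I_j^*$, localizing the problem cleanly to each Whitney piece.

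On each $I_j^*$ the heart of the argument is a local one-dimensional weighted Sobolev inequality at scale $r_j$, comparing $\|g^{(\kappa)}\|_{X^p_\alpha(I_j)}$ against the endpoints $\|g^{(k)}\|_{L^q(I_j^*)}$ and $\|g\|_{L^p(I_j^*)}$, with powers of $r_j$ matching the global weights. This may be proved using Taylor's theorem with integral remainder
\[
g^{(\kappa)}(x+u) - g^{(\kappa)}(x) = \sum_{j=1}^{k-\kappa-1} \frac{g^{(\kappa+j)}(x)}{j!}\, u^j + \int_0^u \frac{(u-s)^{k-\kappa-1}}{(k-\kappa-1)!}\, g^{(k)}(x+s)\, ds;
\]
the remainder term, once inserted into the $X^p_\alpha$-seminorm and estimated via H\"older's inequality at scale $r_j$, produces the correctly-weighted $L^q$-term, while the pointwise intermediate-order coefficients $g^{(\kappa+j)}(x)$ are absorbed into $\|g\|_{L^p(I_j^*)}$ and $\|g^{(k)}\|_{L^q(I_j^*)}$ by a one-dimensional Gagliardo--Nirenberg interpolation on $I_j^*$.

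The last step is to raise the local inequalities to the $p$-th power and sum over $j$. The $L^p$-piece assembles immediately by bounded overlap. For the $L^q$-piece, the key observation is that $q \le p$ yields the sequence-space inclusion $\ell^q \hookrightarrow \ell^p$, that is $\sum_j a_j^p \le \big(\sum_j a_j^q\big)^{p/q}$, applied with $a_j$ a weighted local $L^q$-norm of $g^{(k)}$ over $I_j^*$; the identification $r_j \approx \ell(x)$ on $I_j^*$ then consolidates the discrete weights into the continuous weight $\ell^{k-1/p+1/q}/\ell^\gamma$ that appears in \eqref{wanted}. The main obstacle is the local Sobolev inequality on each $I_j$: one must verify that the powers of $r_j$ emerging from the Taylor remainder and from the Gagliardo--Nirenberg interpolation of the intermediate-order coefficients line up exactly with the prescribed global weight, and the Sobolev-type index condition on $q$ in the statement of Theorem~\ref{flashythm} is precisely what forces these exponents to match.
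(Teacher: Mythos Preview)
Your route differs from the paper's. There, Theorems \ref{flashythm2} and \ref{flashythm} are proved simultaneously via a single decomposition $e^{i\phi}f\ind_\Omega = S_1+S_2+S_3+S_4$ built from Lemmas \ref{fancypart} and \ref{ibplem}; the two theorems only diverge in how the high-frequency piece $\tilde S_3$ is handled. For Theorem \ref{flashythm}, the paper integrates by parts $k$ times inside a localized averaging operator $T_I F$ (with $F=(\phi')^{-\gamma}f$) and applies Young's inequality to obtain $\|\ind_I T_I F\|_p \lesssim \Lambda_I^{-k+1/q-1/p}\|\ind_{I^*}F^{(k)}\|_q$ on each Whitney interval, then sums via the sequence-space inclusion $\ell^q\hookrightarrow\ell^p$. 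Your plan to deduce Theorem \ref{flashythm} from Theorem \ref{flashythm2} through a weighted Sobolev embedding for $X^p_\gamma$ is more modular, and your sketch (Whitney pieces, Taylor remainder, Gagliardo--Nirenberg for intermediate derivatives, $\ell^q\hookrightarrow\ell^p$ reassembly) does yield the correct exponents when $q>1$.

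However, at the endpoint $q=1$ in the critical case $k-\gamma = 1/p'$ the embedding you need,
\[
\|g\|_{X^p_\gamma(\Omega)} \lesssim \|\ell^{-\gamma}g\|_{L^p(\Omega)} + \|g^{(k)}\|_{L^1(\Omega)},
\]
is \emph{false}. On $\Omega=(-1,1)$ with $\ell(x)=1-|x|$, $\gamma=\tfrac12$, $k=1$, $p=2$, let $g_\epsilon$ be a smoothed indicator of $[0,\tfrac14]$ with transition width $\epsilon$: both $\|g_\epsilon'\|_{L^1}$ and $\|\ell^{-1/2}g_\epsilon\|_{L^2}$ stay bounded as $\epsilon\to 0$, yet $M_{1/2}g_\epsilon(x)\gtrsim |x|^{-1/2}$ for $-\tfrac14<x<-\epsilon$, so $\|g_\epsilon\|_{X^2_{1/2}}\gtrsim (\log\tfrac1\epsilon)^{1/2}\to\infty$. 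The obstruction is that controlling the Taylor remainder inside $M_\alpha g^{(\kappa)}$ by $\|g^{(k)}\|_{L^q}$ amounts to an $L^q\to L^p$ bound for a fractional maximal operator of order $k-\gamma$, and Hardy--Littlewood--Sobolev is only weak-type at $q=1$. The paper escapes this precisely because it never passes through the $X^p_\gamma$ functional: its Young-inequality estimate on $T_I F$ uses that the kernel $\Psi_I^{(k)}$ lies in the strong $L^r$ space for the relevant Young exponent, which is fine even against $L^1$ data. To cover the full stated range you would have to abandon the embedding and estimate each Whitney piece of $\tilde S_3$ directly---which is essentially the paper's argument.
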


When one considers the function $f$ to be fixed and $\phi$ is very smooth, Theorems \ref{flashythm2} and \ref{flashythm} can be interpreted as a guide for deducing the decay rate of $||C_{-} (e^{it \phi} f\ind_{\phi'>0})||_p$ given information about both the decay of $f$  towards the boundary and the smoothness of $f$.  Specifically, one chooses $\gamma$ as large as possible so that $(\phi' \ell)^{-\gamma} f$ belongs to $L^p$ of $\Omega$ and that the $k$-th derivative of $(\phi')^{-\gamma} f$ belongs to the appropriate weighted space.  The condition $(\phi' \ell)^{-\gamma} f \in L^p$ is necessary in the sense that the decay rate $t^{-\gamma}$ cannot necessarily be achieved if one tests against a less singular power of the weight $(\phi' \ell)$.  
When $\phi(x) = x^k$, Theorems \ref{flashythm2}  and \ref{flashythm} are also clearly sharp in the sense that both sides scale equally under the change $x \mapsto \lambda x$ for any $\lambda > 0$.  As may be correctly surmised from a comparison of these two theorems, there exist additional scale-invariant estimates which hold in this class of spaces $X^{p}_\beta$ which are a consequence of Sobolev-type embedding relationships.  We give some examples in Section \ref{s.appendix}, but for the most part, leave the details to the reader.

The practical question when {constructing} an admissible $\ell$ is the extent to which the $1$-Lipschitz condition on $\ell$ prohibits it from simply being set equal to the largest possible pointwise values allowed by \eqref{nphase1} and \eqref{nphase2} and the vanishing condition on the boundary. In the ``critical'' case $k-\gamma = \frac{1}{p} - \frac{1}{q}$ of inequality \eqref{wanted}, $\ell$ appears with a negative power in the first term of the right-hand side and is absent in the second term, so at least in this case it is beneficial to choose $\ell$ as large as possible.  
 In Section~\ref{s.Lipschitz}, we demonstrate several facts about admissible functions. We show, for example, that when $\phi'$ vanishes to finite order at some point on the boundary, $\ell$ can be taken to vanish to first order at that point. We also show that when $\phi'$ is a nonconstant polynomial, choosing $\ell(x)$ equal to the minimum distance from $x$ to the critical points of $\phi$ (throughout the complex plane) or to the boundary of $\Omega$ will be admissible with constants depending only on the degree of the polynomial (see Proposition~\ref{basicprop2}). Using this  fact, Theorem~\ref{t.polynomial} follows immediately from Theorem~\ref{flashythm2}.

Let us also briefly discuss the sharpness of the regularity conditions on $\phi$ in Theorems \ref{flashythm2} and \ref{flashythm}.  In the event that the derivative of the phase $\phi'$ is nonvanishing, one might imagine that decay as $t \rightarrow \infty$ in an estimate like \eqref{wanted} will automatically result.  While this may indeed be the case (and, in fact, will be the case if $\phi'$ has any H\"{o}lder continuity), it is easy to see that the decay rate at minimum cannot be uniform in $\phi'$ without extra regularity assumptions.
We can see this by looking at the following example.  Fix any positive $n \ge 2$ such that
\[ c_n := \int_0^1 e^{2 \pi i n x} e^{i \cos 2\pi x} dx  \neq 0 \]
(it is clear that such $n$ exists). If $\hat f$ is supported on $[-A,A]$, then for $t > 2A$, $( e^{2 \pi i n t \cdot + i \cos (2\pi t \cdot)} f)^\wedge(\xi) = c_n \hat f(\xi)$ on $[-A,A]$.  Thus, despite the fact that the derivative of the phase 
$$\phi_t(x) = 2 \pi n x + \frac{\cos(2\pi tx)-1}t$$ 
is comparable to $1$ on the entire real line, where both lower and upper bounds are uniform over $t$, we have
\[ ||C_{-} ( e^{i t\phi_t} f)||_2 \geq |c_n| ||C_{-} f||_2, \]
which does not go to zero for generic $f$.  Thus the assumptions \eqref{nphase1} and \eqref{nphase2} cannot be relaxed to require less regularity of $\phi'$ without losing uniformity of the result.

The rest of this paper is organized as follows: 
in Section \ref{s.Lipschitz}, we construct a fundamentally-important partition of unity associated to admissible functions as well as establish several useful facts about individual admissible functions and the class of admissible functions as a whole. Section \ref{s.proof} contains the proofs of Theorems \ref{flashythm2} and \ref{flashythm}, which is divided into two parts: integration-by-parts or stationary phase arguments (Section \ref{ss.ibp}) and the main decomposition and summation arguments (Section \ref{ss.decomp}).  We note that the details of our approach are substantially different from those appearing in \cite{do2011} or \cite{varzugin1996}. In particular, we use a Littlewood-Paley approach and do not appeal to the Hausdorff-Young inequality.  This allows us to remove the restriction $p\ge 2$ present in these previous works.  Finally, Section \ref{s.appendix} includes some examples of embedding relationships among the spaces $X^p_{\gamma}(\Omega)$  and various other well-known spaces.  In particular, we establish the desired relationship of ${\mathcal L}^{\gamma}_p$ and  $X^{p}_\gamma$ so that Theorem \ref{t.polynomial} follows from Theorem \ref{flashythm2}.  We also give two examples of Sobolev- and Poincar\'{e}-type embeddings among the $X^p_\gamma$ spaces themselves.

\section{Lipschitz function considerations}\label{s.Lipschitz}

\subsection{Property (L) and partitions of unity}
\label{partition}

In this section we establish a relationship between a generalization of the class of Lipschitz functions and certain very nice partitions of unity on the real line.  
We say that a function $h$ defined on the real line satisfies property (L) if
\begin{itemize}
\item The function $h$ is continuous and nonnegative.
\item  For any interval $J$, it holds that
\begin{equation} 
\mbox{either } \sup_{x \in J} h(x) \leq 2 \inf_{x \in J} h(x) \mbox{ or } \sup_{x \in J} h(x) < 2 |J|.  \label{scale} \end{equation}
\end{itemize}
If $h$ is defined on some interval which is not the entire real line, the natural definition of property (L) is to extend $h$ to all of $\R$ to be locally constant outside its original domain while choosing the relevant constant values to preserve continuity.  

The intuition behind \eqref{scale} is that property (L) holds when $h$ measures ``distance up to an order of magnitude.''  When $h(x)$ is large, this should be interpreted as asserting that $x$ is far away from whatever bad set the function $h$ implicitly identifies (namely, the set where $h=0$).  The exact value of $h$ does not hold deep significance, but when it is large at $x$, it should remain large on a proportionally large interval around $x$.  
The prototypical examples of functions with property (L) are $1$-Lipschitz functions, but, significantly, H\"{o}lder functions with sufficiently small norm can generate functions with property (L) as well.  In particular, if
\[ |h(x) - h(y)| \leq \alpha 2^{\alpha - 1} |x-y|^{\alpha} \]
uniformly for all $x, y \in \R$, then we may deduce that
\[ \sup_{x \in J} (h(x))^{\frac{1}{\alpha}} \leq \inf_{x \in J} (h(x))^{\frac{1}{\alpha}} + 2^{\alpha - 1} |J|^{\alpha} \left( \sup_{x \in J} (h(x))^{\frac{1}{\alpha}} \right)^{1 - \alpha}. \]
If $\sup_{x \in J} (h(x))^{\frac{1}{\alpha}} > 2 \inf_{x \in J} (h(x))^{\frac{1}{\alpha}}$ for any particular interval, then
\[ \frac{1}{2} \sup_{x \in J} (h(x))^{\frac{1}{\alpha}} < 2^{\alpha - 1} |J|^{\alpha} \left( \sup_{x \in J} (h(x))^{\frac{1}{\alpha}} \right)^{1 - \alpha}, \]
which implies that $h^{\frac{1}{\alpha}}$ has property (L).  It is also true that if $h$ has property (L) then $\epsilon h$ will also have property (L) for any $\epsilon \in [0,1)$.  We call the condition property (L) because any function satisfying \eqref{scale} is nearly Lipschitz, meaning specifically that there is a Lipschitz function whose pointwise ratio with $h$ is bounded above and below by universal constants, see Proposition~\ref{basicestprop1}.

The most significant feature of property (L) is that $h$ is close enough to a distance measure; so close, in fact, that it allows us to construct a very nice partition of unity on the set where $h$ does not vanish.  This partition of unity generalizes the Whitney decomposition (and essentially coincides with the Whitney decomposition when $h$ equals the distance to the complement of some open set).
\begin{lemma} \label{fancypart}
Suppose that $h$ has property (L).
Then there exists a collection $\mathcal G$ of closed intervals $I$ with the following properties:  
\begin{itemize}
\item The union of all intervals equals the set $U := \set{x \in \R}{h(x) > 0}$.  
\item Every interval $I \in {\mathcal G}$ intersects exactly two others: one on the left and one on the right.  Furthermore, the ratio of lengths of intersecting intervals is between $\frac{1}{2}$ and $2$.
\item Every pair of intervals $I, I' \in {\mathcal G}$ which do not intersect are separated by a distance of at least $\frac{1}{4} |I|$.
\item For any $I \in {\mathcal G}$, the double interval $2 I$  (having twice the length and the same center as $I$) lies strictly inside $U$.  
\item Every interval $I \in \mathcal G$ satisfies
\begin{equation}
 \max \left\{ |I|, \inf_{x \in I} h(x), \sup_{x \in I} h(x) \right\} \leq 6 \min \left\{ |I|, \inf_{x \in I} h(x), \sup_{x \in I} h(x) \right\}. \label{compare}
\end{equation}
\item There is a special partition of unity subordinate to $\mathcal G$: for each $I \in {\mathcal G}$, there is a nonnegative $C^\infty$ function $\psi_I$ supported on $I$ such that $|\psi^{(j)}_I(x)| \lesssim_j |I|^{-j}$ for each $j$ and
\[ \sum_{I \in {\mathcal G}} \psi_I (x) = \begin{cases} 1 & x \in U \\ 0 & x \not \in U \end{cases}. \]
\end{itemize}
\end{lemma}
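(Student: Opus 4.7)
The plan is to prove this via a Whitney-style decomposition of the open set $U = \{h > 0\}$ in which $h$ itself plays the role of ``distance to the complement.'' As a preliminary, I would record two consequences of property (L). Applying the scale condition \eqref{scale} to the interval $J = [x,y]$ with $x \in U$ and $y \in \partial U$ forces the second alternative (since $\inf_J h = 0$), giving $h(x) < 2\operatorname{dist}(x, \partial U)$. Applying \eqref{scale} to any interval $J \ni x$ with $|J| < h(x)/2$ forces the first alternative, giving $\inf_J h \ge h(x)/2$ and, by symmetry, $h(y)/h(x) \in [1/2, 2]$ throughout $J$. Setting $r(x) := h(x)/C_0$ for a sufficiently large universal constant $C_0$, it follows that $I_x := [x-r(x), x+r(x)]$ satisfies $2I_x \subset U$, and both $h$ and $r$ are comparable on $2I_x$ up to a factor of $2$.

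Each connected component of $U$ is an open interval $V$ (possibly unbounded), and on each such $V$ I would select a sequence of centers $\{x_k\} \subset V$, ordered by position, so that consecutive centers are at distance comparable to $r(x_k)$. A natural way is to extract a maximal subset of $V$ subject to the packing constraint $|x_i - x_j| \ge \tfrac12 \max(r(x_i), r(x_j))$ for $i \neq j$; the maximality ensures that the intervals $I_k := I_{x_k}$ still cover $V$, and the boundary-distance bound $r(x) \le (b-x)/C_0$ on $V = (a,b)$ shows that the selected points accumulate geometrically toward the endpoints of $V$. Declaring $\mathcal G := \{I_k\}$, I would then verify that the geometric properties follow from comparability of $h$ on consecutive $I_k$: the ratio bound $r(x_{k+1})/r(x_k) \in [1/2,2]$ and the six-fold comparability \eqref{compare} are immediate consequences of the local-constancy estimate, and $2I_k \subset U$ is built into the construction.

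The most delicate part is enforcing the combinatorial requirement that each interval meet exactly two others. The packing inequality above already guarantees that any two $I_k$'s whose centers are separated by more than a universal multiple of $\max(r(x_i), r(x_j))$ are disjoint, so only neighbors in the position ordering can intersect; a small additional pruning step, discarding intervals whose deletion does not destroy the cover, together with a careful choice of the packing constant, ensures both that adjacent intervals do intersect and that non-adjacent ones are separated by at least $|I|/4$. The main technical obstacle is precisely here: since property (L) gives only a bounded-ratio version of Lipschitz control on $r$ rather than a genuine Lipschitz bound, the universal constants must be tuned with care, and one cannot simply set up a clean fixed-point recursion for the $x_k$.

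Finally, the partition of unity follows the standard template. I would fix a nonnegative $C^\infty$ bump $\chi$ with $\chi \equiv 1$ on $[-1/2, 1/2]$ and support in $[-1,1]$, set $\chi_k(x) := \chi((x - x_k)/r(x_k))$, and define $\psi_{I_k} := \chi_k / \sum_j \chi_j$. The denominator is locally finite (at most three nonzero terms at any point, by the exactly-two-neighbors property), is bounded below on $U$ because every point lies in some $I_k$ where $\chi_k \ge 1/2$, and the derivative bounds $|\psi_{I_k}^{(j)}| \lesssim_j |I_k|^{-j}$ follow from the Leibniz rule together with the comparability of $|I_k|$'s at consecutive indices.
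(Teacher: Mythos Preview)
Your proposal is viable in outline and arrives at the same endpoint, but it takes a genuinely different route from the paper. The paper's construction is dyadic: it declares a dyadic interval $J$ to be \emph{good} when $|J| \le \inf_{J} h$ but this fails for the dyadic parent $J^{+}$, shows that the good intervals are pairwise nonoverlapping and cover $U$, and then sets ${\mathcal G} := \{\tfrac{4}{3}J : J \text{ good}\}$. The rigid dyadic structure does almost all of the combinatorial work: neighboring good intervals automatically have length ratio in $\{\tfrac12,1,2\}$, the $\tfrac{4}{3}$--dilation produces a definite overlap with each neighbor and a definite gap from every non-neighbor, and the specific constants $6$ and $\tfrac14$ in the statement fall out of short arithmetic with powers of $2$. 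The partition of unity is then $\psi_I := \eta_I / (\eta_{I_L} + \eta_I + \eta_{I_R})$, and the denominator is bounded below simply because each $\eta_{I'}$ is identically $1$ on the good dyadic interval it came from.

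Your maximal--packing approach trades this rigidity for flexibility. That buys you a construction that would transplant more readily to settings without a natural dyadic grid, but it also forces you to earn by hand what the dyadic grid gives for free --- exactly the ``delicate'' pruning and constant--tuning you flag. In particular, with your packing constraint $|x_i - x_j| \ge \tfrac12 \max(r(x_i), r(x_j))$ and intervals $I_k = [x_k - r(x_k), x_k + r(x_k)]$, maximality gives coverage but does not by itself force a \emph{uniform lower bound} on the overlap of consecutive $I_k$'s (they could merely touch), which you need both for the ``exactly two neighbors'' count and for the denominator bound in the partition of unity. Shrinking the packing constant fixes the overlap but risks letting three or more intervals meet; this is the tension your pruning step has to resolve, and it is real work that the dyadic argument avoids entirely. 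If you pursue this route, you will also end up with universal constants in \eqref{compare} and in the separation estimate that depend on your choice of $C_0$ and the packing parameter rather than the stated $6$ and $\tfrac14$; that is harmless for the downstream applications (which only use $\approx$ and $\lesssim$), but it means your lemma would be stated with unspecified absolute constants rather than the explicit ones here.
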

\begin{proof}
We first restrict attention to the dyadic intervals---all intervals expressible as $[ j 2^{k}, (j+1) 2^k]$ for any $j, k \in \Z$.  This collection of intervals has several nice properties, chief among which is that any two dyadic intervals whose interiors intersect must have one contained in the other.  An almost equally important consequence of this fact is that each dyadic interval $J$ has a unique parent $J^+$ which contains $J$ and has twice the length.

Given the function $h$, we say that a dyadic interval $J$ is good when
\[ |J| \leq \inf_{x \in J} h(x) \mbox{ and } \inf_{x \in J^+} h(x) < |J^+|. \]
Since the good intervals are a subcollection of the dyadic intervals, they inherit the property that any two which overlap (i.e., have intersecting interiors) must be nested. But $\inf_{x \in J} h(x)$ decreases as $|J|$ increases, so clearly no good interval could be strictly contained in another.  Thus the good intervals are nonoverlapping.  

Next we observe that the good intervals cover $U$.  For any $x \in U$, there is a doubly-infinite nested sequence of dyadic intervals $\cdots \subset J_{0} \subset J_1 \subset \cdots$ whose intersection is $\{x\}$ and whose union is all of $\R$. If $2^k x$ is never an integer then this sequence is uniquely specified, but if $x = j 2^k$, then there are two possibilities: we can take $J_{k'} := [x,x+2^{k'}]$ for $k' \leq k$ and then take $J_{k'} := J_{k'-1}^+$ when $k' > k$, or we could instead have taken $J_{k'} := [x-2^{k'},x]$ for $k' \leq k$.
Since $h(x)>0$ and since $h$ is continuous, all sufficiently small dyadic intervals in this sequence will satisfy $|J| \leq \inf_{x \in J} h(x)$.  It is also clear that all intervals (in this sequence) of length greater than $h(x)$ will fail to satisfy the inequality.  Thus there will be a unique interval in the sequence which is good.  

Every good interval $J$ must have unique left and right neighbors $J_L$ and $J_R$ which intersect it exactly at its left and right endpoints, respectively. Indeed, if $J =[a,b]$ is good with length $2^k$, then we can extend the sequence $J_{k'} := [a-2^{k'},a]$ for $k \leq k$ to a doubly-infinite sequence and find some good $J'$ containing $a$.  In this case, $J' \neq J$ since the sequence we are choosing from explicitly does not contain $J$ as a possible choice. We may repeat this argument to find the right neighbor for $J$.

Under the assumption \eqref{scale}, for all good intervals $J$ we have
\begin{align}
\sup_{x \in J} h(x) & \leq 2 \inf_{x \in J} h(x), \label{compcond1} \\
\sup_{x \in J} h(x) & \leq \sup_{x \in J^+} h(x) < 4 |J|, \label{compcond2}
\end{align}
with strict inequality because we know that $\inf_{x \in J^+} h(x) < |J^+| = 2|J|$.
If $J$ and $J'$ share a common endpoint $a$ (i.e., they are neighbors), then
\[ |J'| \leq \inf_{x \in J'} h(x) \leq h(a) \leq \sup_{x \in J} h(x) < 4 |J|. \]
Since $|J'|$ and $|J|$ are both powers of $2$, it follows that $|J'| \leq 2|J|$.  Summing the geometric series gives that $h$ cannot vanish on the interior of $3 J$ for any good interval $J$.  

We now populate the collection ${\mathcal G}$ with intervals: we say that $I \in {\mathcal G}$ when $I = \frac{4}{3} J$ for some good $J$.  Since $\frac{4}{3} J$ covers at least $\frac{1}{12}$ and at most $\frac{1}{3}$ of $J$'s neighbors $J$, we will have that the distance between $I, I' \in {\mathcal G}$ is at least $\frac{1}{4} |I|$ when $I \cap I' = \emptyset$.  We know that
\begin{align*}
\frac{3}{4} |I| \leq \inf_{x \in I} h(x) & \leq \sup_{x \in I} h(x) \leq 2 \max \{ |I|, \inf_{x \in I} h(x) \}, \\
\inf_{x \in I} h(x) & \leq \inf_{x \in J} h(x) < 4 |J| = 3 |I|.
\end{align*}
These allow us to conclude \eqref{compare}.

We construct the partition of unity in the usual way:  fix some $C^\infty$ nonnegative function $\eta$ which is identically zero outside $[0,1]$ and identically one on $[\frac{1}{8}, \frac{7}{8}]$.  For any $I := [a,b] \in {\mathcal G}$, we define $\eta_I(x) := \eta( \frac{x-a}{b-a})$ and then let
\[ \psi_I(x) := \eta_I(x) \left( \eta_{I_L}(x) + \eta_I(x) + \eta_{I_R}(x) \right)^{-1}. \]
The denominator is uniformly bounded away from zero on $I$ since for any $I' \in {\mathcal G}$, $\eta_{I'}$ is identically one on the good dyadic interval from which it was derived. The comparability of lengths of adjacent intervals guarantees that $|\psi_I^{(j)}(x)| \lesssim_j |I|^{-j}$ for any $I$.  
\end{proof}

An almost immediate corollary is that any function $h$ with property (L) is comparable to a $C^\infty$ function on the set where $h \neq 0$.  The natural bounds on the various derivatives of this function and the comparability constants are universal.  We summarize this fact in the following proposition:
\begin{proposition}\label{basicestprop1}
Suppose that $h$ has property (L).  Then there exists a function $g$ such that $g$ is $C^\infty$ on the set where $h\ne 0$, and uniform over this set we have $h(x) \approx g(x)$  and $|g^{(j)}(x)| \lesssim_j (g(x))^{1-j}$.
\end{proposition}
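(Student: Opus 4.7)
The plan is to feed $h$ into Lemma~\ref{fancypart} and build $g$ directly from the partition of unity it produces. Writing $\mathcal{G}$ for the resulting family of intervals and $\{\psi_I\}_{I\in\mathcal{G}}$ for the subordinate bumps, I would set
\[ g(x) := \sum_{I \in \mathcal{G}} |I|\,\psi_I(x), \qquad x \in U := \{h > 0\}. \]
Since $\sum_I \psi_I \equiv 1$ on $U$ and each point of $U$ lies in at most three of the $I$'s (by the neighboring and separation properties in Lemma~\ref{fancypart}), the sum is locally finite and $g$ is automatically $C^\infty$ on $U$.

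The comparability $g \approx h$ is then an immediate consequence of the key estimate \eqref{compare}: on each $I \in \mathcal{G}$ the three quantities $|I|$, $\inf_I h$, $\sup_I h$ are mutually comparable up to the universal constant $6$, so for every $x \in U$,
\[ g(x) = \sum_{I \ni x} |I|\,\psi_I(x) \approx h(x) \sum_{I \ni x} \psi_I(x) = h(x), \]
with uniform implicit constants.

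For the derivative bounds I would differentiate termwise (valid by local finiteness) and use the estimate $|\psi_I^{(j)}(x)| \lesssim_j |I|^{-j}$ from the last bullet of Lemma~\ref{fancypart}. Each of the at most three nonzero summands at $x$ contributes $|I|\,|\psi_I^{(j)}(x)| \lesssim_j |I|^{1-j}$, and on any $I$ containing $x$ we already have $|I| \approx h(x) \approx g(x)$. Summing the finitely many contributions yields $|g^{(j)}(x)| \lesssim_j g(x)^{1-j}$ for every $j \geq 1$, which is exactly the desired bound.

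I do not expect a real obstacle: all the structural work has been packaged into Lemma~\ref{fancypart}, and what remains is a weighted partition-of-unity argument. The only mildly delicate point is the case $j = 1$, where the naive estimate gives only $|g'(x)| \lesssim 1$; this does match $g(x)^{0} = 1$, but if a cleaner bound is wanted one can exploit $\sum_I \psi_I' \equiv 0$ to write $g'(x) = \sum_I (|I| - |I_0|)\,\psi_I'(x)$ for a fixed $I_0 \ni x$ and then use the factor-of-$2$ comparability of adjacent intervals. No further technical issues arise.
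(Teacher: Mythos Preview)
Your approach is exactly the one the paper takes: define $g(x) := \sum_{I \in \mathcal G} |I|\,\psi_I(x)$ and read off the comparability and derivative bounds from Lemma~\ref{fancypart}. The paper in fact omits all the details you supply, so your write-up is a faithful expansion of the intended argument (your side remark on $j=1$ is unnecessary, since $|g'(x)| \lesssim 1 = g(x)^0$ is already the claimed bound).
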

The proof uses the partition of unity just constructed. Simply set
\[ g(x) := \sum_{ I \in \mathcal G} |I| \psi_I(x). \]
All the necessary estimates for $g$ and its derivatives follow immediately from the various estimates established in Lemma~\ref{fancypart}. We omit the details.

Next we give a pair of propositions which establish, among other things, that when $\ell$ is admissible with respect to $\phi$,  the supremum and infimum of $\phi'$ on an interval $I$ generated by $\ell$ (using Lemma~\ref{fancypart}) are comparable. 

\begin{proposition}\label{basicestprop2}
Suppose that $h$ has property (L). Let $U=\{x: h(x) > 0\}$ and let $\mathcal G$ be constructed by Lemma~\ref{fancypart}.
Let $g$ be such that at least one of the following holds:
\begin{itemize}
\item The function $g$ is $C^1$ and satisfies the following estimate on $U$:
\[ h(x) |g'(x)| \lesssim  |g(x)|.\]
\item For some $\alpha\in [0,1]$ the following holds: for any $x,y\in U$, $|x-y| \lleq h(x)$ implies
$$\frac{| g(x) - g(y) |}{|x-y|^\alpha} \lesssim \frac{|g(x)|}{h(x)^\alpha}.$$
\end{itemize}  
Then for all $I \in {\mathcal G}$,
\[ \sup_{x \in I} |g(x)| \lesssim \inf_{x \in I} |g(x)|. \]
\end{proposition}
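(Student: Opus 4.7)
The plan is to reduce both cases to the observation from Lemma~\ref{fancypart} that for every $I \in \mathcal G$ the three quantities $|I|$, $\inf_{x\in I} h(x)$, and $\sup_{x\in I} h(x)$ are all comparable within a universal multiplicative constant. In particular, $h(z) \approx |I|$ uniformly on $I$, so on each such interval the hypotheses on $g$ may be rewritten with $h$ replaced by $|I|$, which reduces the problem to a scale-invariant local comparability statement.

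For the differential hypothesis $h(x)|g'(x)| \lesssim |g(x)|$, I would rewrite it on a fixed $I\in\mathcal G$ as $|g'(z)| \leq C|g(z)|/|I|$ for some universal $C$. The fundamental theorem of calculus gives
\[ |g(y)| \leq |g(x)| + \frac{C}{|I|}\int_{[x,y]} |g(z)|\,dz \qquad \text{for } x,y\in I, \]
and Grönwall's inequality applied to $|g|$ then yields
\[ |g(y)| \leq |g(x)|\exp\!\Big(\frac{C|y-x|}{|I|}\Big) \leq e^C |g(x)|. \]
Swapping the roles of $x$ and $y$ and then taking supremum in one variable and infimum in the other produces the desired comparison.

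For the Hölder-type hypothesis, the comparability $h(x)\approx |I|$ together with $|x-y| \leq |I|$ rephrases the bound as $|g(x)-g(y)| \leq C(|x-y|/|I|)^\alpha |g(x)|$, valid whenever $|x-y|$ is at most a small universal multiple of $|I|$ (so that the $\lleq$ hypothesis is active). I would then fix a universal $\delta \in (0,1)$ with $C\delta^\alpha \leq \tfrac12$, so that $|x-y|\leq \delta|I|$ forces $|g(y)| \geq |g(x)|/2$. For arbitrary $x,y\in I$, chain together at most $N := \lceil 1/\delta\rceil + 1$ intermediate points spaced by at most $\delta|I|$ and apply this inequality successively to conclude $|g(y)| \geq 2^{-N}|g(x)|$, with $N$ independent of $I$ and $g$.

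The only technical subtlety is to ensure that $g$ does not vanish at some isolated subset of $I$, which would obstruct both chaining arguments. In Case~1 a zero of $g$ at some $x_0$ forces $g\equiv 0$ in a neighborhood of $x_0$ via Grönwall; in Case~2 this is immediate upon substituting $x_0$ into the hypothesis. Since $g$ is continuous (automatic in Case~1 and a direct consequence of the Hölder bound in Case~2), the set $\{g=0\}$ is both open and closed in the connected interval $I$, and so either $g\equiv 0$ on $I$ (making the conclusion vacuous) or $g$ is nowhere zero on $I$ (legitimizing the arguments above). Beyond this dichotomy, no real obstacle remains; the proof is essentially quantitative bookkeeping of the universal constants supplied by Lemma~\ref{fancypart} and by the hypotheses on $g$.
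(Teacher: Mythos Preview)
Your proof is correct and follows essentially the same strategy as the paper: use the comparability $h\approx|I|$ on each $I\in\mathcal G$ from Lemma~\ref{fancypart} to reduce to a local statement, and then propagate a multiplicative bound across $I$ by a chaining argument. For the H\"older case your argument and the paper's are virtually identical (subdivide $I$ into $\approx 1/\delta$ pieces and chain). For the $C^1$ case the paper also subdivides and chains, whereas you invoke Gr\"onwall's inequality directly; this is a cosmetic difference, and your route is arguably cleaner since it avoids the explicit subdivision and makes the dichotomy about zeros of $g$ unnecessary (the integral form of Gr\"onwall applies to $|g|$ regardless).
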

\begin{proof}
Consider the first of the two alternatives.  Restrict attention to a particular $I \in {\mathcal G}$.  For points $x \in I$, we have  $|I| |g'(x)| \lesssim h(x) |g'(x)| \lesssim |g(x)|$, so that
\[ |g(x') - g(x)| \leq \int_x^{x'} |g'(u)| du \lesssim \frac{|x-x'|}{|I|} \sup_{y \in [x,x']} |g(y)| \]
when $x,x' \in I$.  Fix $N \ggeq 1$ to be chosen later. If we now subdivide $I$ into nonoverlapping intervals $I_1,\ldots,I_N$ of equal length, we may apply this inequality to conclude
\[ \sup_{x \in I_j} |g(x)| - \inf_{x \in I_j} |g(x)| \lesssim \frac{1}{N} \sup_{x \in I_j} |g(x)|. \]
Choosing $N \ggeq 1$ relative to the implicit constant will give that
\[ \sup_{x \in I_j} |g(x)| \leq 2 \inf_{x \in I_j} |g(x)| \]
for all $j=1,\ldots,N$.  Chaining these inequalities together, we conclude that the supremum on $I$ of $g$ does not exceed $2^N$ times the infimum.

For the second alternative, suppose that
$$\frac{|g(x)-g(y)|}{|x-y|^\alpha} \lesssim  \frac{|g(x)|}{h(x)^\alpha} $$
for any $x,y\in U$ such that $|x-y| \lleq h(x)$.
Fix an interval $I\in \mathcal G$ generated by $h$ and further divide $I$ into $N$ equal parts $I_1, \dots, I_N$ for some $N\ggeq 1$. We first choose $N$ larger than some absolute constant such that $|I_j| \lleq h(x)$ for any $x\in I$; this is always possible since $h(x) \approx |I|$ for every $x\in I$. It follows that for any $x,x'\in I$, we have
$$|g(x)-g(x')| \lesssim \frac{|x-y|^\alpha}{h(x)^\alpha}  |g(x)| \lesssim \frac 1 {N^\alpha} |g(x)|.$$
The rest of the argument is similar to the proof of part (1). 
\end{proof}

\begin{proposition}\label{basicestprop3}
Suppose that $h$ has property (L). Let $U=\{x: h(x) > 0\}$ and let $\mathcal G$ be constructed by Lemma~\ref{fancypart}.
Let $g$ be such that one of the following holds:
\begin{itemize}
\item The function $g$ is $C^1$ on $U$ and the following estimate holds uniformly on $U$:
\[ (h(x))^{\kappa+1} |g'(x)| \lesssim 1.\]
\item For some $\alpha \in (0,1]$ and $\kappa \ge 0$ the following holds: for any $x,y\in U$, $|x-y| \lleq h(x)$ implies
\[ \frac {|g(x) - g(y) |}{|x-y|^\alpha} \lesssim h(x)^{-\kappa - \alpha}. \]
\end{itemize}
Then for any interval $I \in {\mathcal G}$, either
\[ \sup_{x \in I} |g(x)| \lesssim \inf_{x \in I} |g(x)|\] 
or
\[\sup_{x \in I} |g(x)|  \lesssim |I|^{-\kappa}. \]
\end{proposition}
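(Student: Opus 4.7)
The plan is to mimic the strategy of Proposition~\ref{basicestprop2} but to track the fact that here the pointwise estimate on $g$ (or its differences) involves $h(x)^{-(\kappa+\alpha)}$ rather than $|g(x)|$ itself. This will yield an \emph{additive} control on the oscillation of $g$ on $I$ of the form $\sup_I|g|-\inf_I|g|\lesssim |I|^{-\kappa}$, and the advertised dichotomy will then fall out by a one-line case split.

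First, I would fix $I\in\mathcal{G}$ and record, from \eqref{compare} in Lemma~\ref{fancypart}, that $h(x)\approx |I|$ for every $x\in I$. Under the first hypothesis (the $C^1$ bound) this immediately gives $|g'(x)|\lesssim h(x)^{-(\kappa+1)}\approx |I|^{-(\kappa+1)}$ on $I$, so that for any $x,y\in I$
\[
|g(x)-g(y)|\le \int_{x}^{y}|g'(u)|\,du\;\lesssim\;|x-y|\,|I|^{-(\kappa+1)}\;\le\;|I|^{-\kappa}.
\]

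Under the second (Hölder) hypothesis the argument is essentially the same but requires a chaining step, because the increment estimate is only valid when $|x-y|\lleq h(x)$. I would choose a universal integer $N\ggeq 1$ large enough that, using $h(x)\approx |I|$ on $I$, any two points in a common subinterval of length $|I|/N$ lie at distance $\lleq h(x)$. Subdividing $I$ into $N$ equal closed pieces $J_1,\dots,J_N$ in order, the Hölder hypothesis gives
\[
\sup_{x,y\in J_j}|g(x)-g(y)|\;\lesssim\;\bigl(|I|/N\bigr)^{\alpha}|I|^{-(\kappa+\alpha)}\;=\;N^{-\alpha}|I|^{-\kappa}.
\]
Since consecutive $J_j$'s share an endpoint, the triangle inequality across at most $N$ pieces gives $\sup_{I}|g|-\inf_{I}|g|\lesssim N^{\,1-\alpha}|I|^{-\kappa}$, and since $N$ is a universal constant we again conclude $\sup_{I}|g|-\inf_{I}|g|\lesssim |I|^{-\kappa}$.

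At this point the proof is essentially done. Writing $M:=\sup_{I}|g|$, $m:=\inf_{I}|g|$, and letting $C$ denote the implicit constant in $M-m\le C|I|^{-\kappa}$, I split into two cases. If $M\le 2C|I|^{-\kappa}$, the second alternative $\sup_{I}|g|\lesssim |I|^{-\kappa}$ holds directly. Otherwise $M> 2C|I|^{-\kappa}$, whence $m\ge M-C|I|^{-\kappa}> M/2$, giving $M\le 2m$, i.e.\ the first alternative $\sup_{I}|g|\lesssim \inf_{I}|g|$. I do not anticipate any real obstacle here: the only slightly delicate point is verifying that one may choose $N$ universal in the Hölder case, and this is transparent from the universal comparability $h(x)\approx |I|$ on $I$ built into the partition from Lemma~\ref{fancypart}.
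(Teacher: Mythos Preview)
Your proposal is correct and is essentially identical to the paper's proof: both first establish $\sup_I|g|-\inf_I|g|\lesssim |I|^{-\kappa}$ by mimicking the subdivision/chaining argument of Proposition~\ref{basicestprop2}, and then deduce the dichotomy by a one-line case split. The only cosmetic difference is that the paper phrases the split as ``if $\sup_I|g|>2\inf_I|g|$ then $\sup_I|g|\lesssim |I|^{-\kappa}$,'' whereas you split on whether $\sup_I|g|$ exceeds $2C|I|^{-\kappa}$; these are equivalent.
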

\begin{proof}
We may follow the same arguments used in the proof of Proposition~\ref{basicestprop2} to conclude that
\[ \sup_{x, y \in I} |g(x) - g(y)| \lesssim |I|^{-\kappa}. \]
Therefore we have that
$$\sup_{x\in I}|g(x)| - \inf_{x\in I} |g(x)| \lesssim |I|^{-\kappa}.$$
It follows that if $\sup_{x\in I}|g(x)| > 2\inf_{x\in I} |g(x)|$ then 
$$\frac 1 2 \sup_{x\in I}|g(x)| \le \sup_{x\in I}|g(x)| - \inf_{x\in I} |g(x)| \lesssim |I|^{-\kappa},$$
which is exactly the desired inequality.
\end{proof}

\subsection{Concerning the choice of an admissible $\ell$}

We briefly turn to the issue of admissibility. 
As mentioned in the introduction, conditions \eqref{nphase1} and \eqref{nphase2} can simply be regarded as upper bounds on the pointwise magnitude of $\ell$.  As already mentioned, in the ``critical cases'' of Theorems \ref{flashythm2} and \ref{flashythm}, one always benefits from choosing $\ell$ as large as possible.  Even in noncritical cases, there is still a penalty to be paid for choosing the pointwise values of $\ell$ to be too small.  The apparent difficulty of identifying an admissible $\ell$ is that the local constraints \eqref{nphase1} and \eqref{nphase2} are accompanied by a nonlocal constraint that $\ell$ be $1$-Lipschitz.  As it turns out, the constraint that $\ell$ be $1$-Lipschitz is not generally difficult to satisfy, and so admissible choices of $\ell$ exist which are not significantly smaller pointwise than is required by \eqref{nphase1} and \eqref{nphase2}.  In this section, we give two propositions which make this idea more precise. The first proposition makes several general observations about admissibility, and the second addresses the specific case of polynomial phases $\phi$.

Throughout this section, we fix $\kappa$ to be a nonnegative integer and $\alpha \in [0,1]$. We suppose that a real-valued phase $\phi$ is fixed on a domain $\Omega$ with $\phi'>0$ everywhere in $\Omega$ and that $\phi^{(\kappa+1)}$ is locally H\"older continuous with exponent $\alpha$.  

\begin{proposition} \label{basicprop1}
The following are true:  
\begin{itemize}
\item The pointwise maximum of any finite collection of admissible functions is also admissible.  If there is a uniform bound on the implied constants, then the supremum over an infinite collection of admissible functions will also be admissible.

\item There exists a function $\ell$ such that $\ell$ is positive on $\Omega$ and is admissible with respect to $\phi$, $\kappa$, $\alpha$ on $\Omega$.  Furthermore, this $\ell$ has $\ell(x)=0$ exactly on $\Omega^c$ (note that this is vacuous if $\Omega=\R$).

\item Suppose $\kappa \geq 1$ and that $h$ has property (L) and is nonvanishing on $\Omega$ and that
$x,y\in \Omega$, $|x-y|\lleq h(x)$, implies
\begin{equation}
 \left| \frac{\phi^{(\kappa+1)}(x)}{\phi'(x)} - \frac{\phi^{(\kappa+1)}(y)}{\phi'(y)} \right| \lesssim \frac{|x-y|^{\alpha}}{h(x)^{\kappa+\alpha}}. \label{showlip}
 \end{equation}
Then there is an admissible function $\ell$ such that
\[ \ell(x) \approx \min \left\{ \left| \frac{\phi'(x)}{\phi''(x)} \right|,\ldots, \left|  \frac{\phi'(x)}{\phi^{(\kappa+1)}(x)} \right|^{\frac{1}{\kappa}}, h(x) \right\} \]
for all $x \in \Omega$.
\end{itemize}
\end{proposition}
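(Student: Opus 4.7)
\medskip

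The plan is to treat the three parts in order. For the first bullet, I would simply check that each of the defining conditions of admissibility is preserved under taking the pointwise maximum. The $1$-Lipschitz property is preserved under pointwise max (and sup with a uniform Lipschitz constant) by standard arguments, and the vanishing on $\partial\Omega$ and nonnegativity pass trivially. For \eqref{nphase1} and \eqref{nphase2}, the key observation is that at each point $x$ the value $\ell(x)=\max_i\ell_i(x)$ is attained by some $\ell_{i(x)}$, so that the instances of $\ell(x)$ appearing on the left-hand sides of these inequalities can simply be replaced by $\ell_{i(x)}(x)$, at which point the admissibility of $\ell_{i(x)}$ gives exactly what is needed. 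The infinite case then follows provided the implicit constants of the $\ell_i$ are uniform.

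For the second bullet, the strategy is to build small, localized admissible functions and combine them via the first bullet. Since $\phi'$ is continuous and positive on $\Omega$ and $\phi^{(\kappa+1)}$ is locally H\"older with exponent $\alpha$, around each $x_0\in\Omega$ I can choose a small radius $r_{x_0}>0$ (with $r_{x_0}<d(x_0,\partial\Omega)$) such that on $B(x_0,r_{x_0})$ the quantity $|\phi'|$ is bounded below, the derivatives $|\phi^{(j+1)}|$ for $j=1,\dots,\kappa$ are bounded above, and the local $\alpha$-H\"older norm of $\phi^{(\kappa+1)}$ is controlled. The triangle function $\ell_{x_0}(x):=\max\{0,\ r_{x_0}-|x-x_0|\}$ is $1$-Lipschitz, vanishes on $\partial\Omega$ (by the choice of $r_{x_0}$), and, by shrinking $r_{x_0}$ if necessary, satisfies \eqref{nphase1} and \eqref{nphase2} with absolute implicit constants. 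Taking the supremum over $x_0\in\Omega$ and applying the first bullet yields an admissible function positive precisely on $\Omega$.

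The third bullet is the real content. Let $M(x):=\min\bigl\{|\phi'/\phi''|(x),\dots,|\phi'/\phi^{(\kappa+1)}|^{1/\kappa}(x),\ h(x)\bigr\}$. The plan is to show that $M$ has property (L), and then apply Proposition~\ref{basicestprop1} to extract a $C^\infty$ function $g\approx M$ with $|g^{(j)}|\lesssim g^{1-j}$; the rescaled function $\ell:=cg$ is $1$-Lipschitz for $c$ small, vanishes where $M$ does (hence in particular on $\partial\Omega$), and obeys $\ell\approx M$. To verify admissibility of $\ell$, the pointwise estimate \eqref{nphase1} follows directly from $\ell(x)\lesssim|\phi'/\phi^{(j+1)}|^{1/j}(x)$. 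For \eqref{nphase2}, I would write $\phi^{(\kappa+1)}(x)-\phi^{(\kappa+1)}(y) = \phi'(x)[R(x)-R(y)] + R(y)[\phi'(x)-\phi'(y)]$ with $R:=\phi^{(\kappa+1)}/\phi'$, control $R(y)$ by $\ell(y)^{-\kappa}\approx\ell(x)^{-\kappa}$ (using that $\ell$ is $1$-Lipschitz and $|x-y|\lleq\ell(x)$), control $|\phi'(x)-\phi'(y)|$ via $|\phi''|\lesssim|\phi'|/\ell$ and comparability from Proposition~\ref{basicestprop2}, and apply the hypothesis \eqref{showlip} together with $\ell\le h$ to control $|R(x)-R(y)|$.

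The main obstacle is proving that $M$ has property (L), since only $h$ is assumed to have it and the derivative ratios $|\phi'/\phi^{(j+1)}|^{1/j}$ have no a priori regularity. My plan is to work inside an interval $I\in\mathcal G$ of the partition of unity generated by $h$ (so that $h(x)\approx|I|$ throughout $I$) and show that on each such $I$ the function $M$ is comparable. Proposition~\ref{basicestprop2} applied with $g=\phi'$ using $|\phi''|\ell\lesssim|\phi'|$ (which I establish iteratively) gives $\phi'(x)\approx\phi'(y)$ on $I$; Proposition~\ref{basicestprop3} applied to $R=\phi^{(\kappa+1)}/\phi'$ via the hypothesis \eqref{showlip} shows that either $|R|$ is comparable on $I$ or $|R|\lesssim|I|^{-\kappa}$, the latter giving $|\phi'/\phi^{(\kappa+1)}|^{1/\kappa}\gtrsim|I|\approx h$ on $I$, so that this factor is dominated by $h$ in the definition of $M$. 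For the intermediate derivatives $j<\kappa$, the $C^{\kappa-j}$ smoothness of $\phi^{(j+1)}$ together with repeated integration against the bound on $R$ allows a similar dichotomy, so that on each interval $I$ of $\mathcal G$ the quantity $M$ is comparable. Patching these local comparabilities and combining with property (L) of $h$ then yields property (L) for $M$ on all intervals, completing the construction.
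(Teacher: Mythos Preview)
Your treatment of the first two bullets is fine; the second bullet takes a different route from the paper (the paper writes down an explicit pointwise infimum of $1$-Lipschitz functions rather than a supremum of local triangle functions), but your approach works.

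The genuine gap is in the third bullet, specifically in the handling of the intermediate ratios $r_j(x):=|\phi'(x)/\phi^{(j+1)}(x)|^{1/j}$ for $1\le j\le \kappa-1$. Your plan is to show $M=\min\{r_1,\dots,r_\kappa,h\}$ is approximately constant on each interval $I$ of the partition generated by $h$, by applying Propositions~\ref{basicestprop2}--\ref{basicestprop3} on that partition. But those propositions, applied with the function $h$, require hypotheses of the form $h(x)|g'(x)|\lesssim|g(x)|$ or $h(x)^{\kappa+1}|g'(x)|\lesssim 1$. For $g=\phi'$ this would need $h\lesssim r_1$, and for $g=\phi^{(j+1)}/\phi'$ the derivative $g'=\phi^{(j+2)}/\phi'-\phi^{(j+1)}\phi''/(\phi')^2$ brings in $r_{j+1}$ and $r_1$ simultaneously; neither is assumed bounded below by $h$. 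So the comparability of $\phi'$ on $I\in\mathcal G(h)$ that you want to feed into the argument is itself unproved, and the ``iterative'' or ``repeated integration'' step for $j<\kappa$ is circular: each step needs control of ratios you have not yet incorporated into the minimum.

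The paper resolves this with a direct computation you are missing. Differentiating $r_j$ gives
\[
r_j'(x)=\tfrac{1}{j}\Bigl[\tfrac{\phi''}{\phi'}-\tfrac{\phi^{(j+2)}}{\phi^{(j+1)}}\Bigr]r_j(x),
\qquad
|r_j'|\le \tfrac{1}{j}\Bigl(\tfrac{r_j}{r_1}+\bigl(\tfrac{r_j}{r_{j+1}}\bigr)^{j+1}\Bigr),
\]
so at any point where $r_j$ realizes the minimum in $\min\{r_1,\dots,r_{\kappa-1},\tilde h\}$ (with $\tilde h\approx\min\{r_\kappa,h\}$ built from the $h$-partition via Proposition~\ref{basicestprop3} exactly as you do for $r_\kappa$) one has $|r_j'|\lesssim 1$. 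This one-sided local Lipschitz bound for the minimizing branch is enough to force the full minimum to be Lipschitz after rescaling by a constant, and then \eqref{nphase1} holds by construction and \eqref{nphase2} follows by the decomposition you already wrote down. The identity above is the missing idea; once you have it, you do not need to prove property~(L) for $M$ at all.
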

Remark: If $\phi$ is sufficiently nice we may take $h(x)=d(x,\Omega^c)$ in part (3) and construct $\ell$ that vanishes on the boundary of $\Omega$ with first order.
\begin{proof}
We approach the various points consecutively.  The first point is essentially a triviality.

[Proof of second point.] To begin, we identify a natural choice of $\ell$ satisfying \eqref{nphase1} and \eqref{nphase2}. Fix a positive $K$.  At any $x \in \R$, define 
$$\ell(x) :=\min \{ d(x, \partial \Omega), \ell_0(x)\},$$ 
where $d(x,\partial \Omega)$ is the distance to the boundary of $\Omega$ (which we take to be $\infty$ if $\Omega = \mathbb R$), and
\begin{align*}
\ell_0(x) := \mathop{\inf_{y, z \in \Omega}}_{1 \leq j \leq \kappa} \min \left\{  \vphantom{\frac{a^\frac{1}{p}}{b}} \right.    |x&-y|  + K  \left. \left( \frac{|\phi'(y)|}{ |\phi^{(j+1)}(y)|} \right)^{\frac{1}{j}}   \right., |x-y| + \\
   & + \left.  K \left( \frac{|y-z|^{\alpha} |\phi'(z)|}{ |\phi^{(\kappa+1)}(y) - \phi^{(\kappa+1)}(z)|}  \right)^{\frac{1}{\kappa+ \alpha}}  + |y-z|\right\}.
\end{align*}
This function is a pointwise infimum of $1$-Lipschitz functions, so it is also $1$-Lipschitz.  Note that $\ell$ as defined above is finite in all cases other than when $\phi'' \equiv 0$ and $\Omega = \R$; in that particular case, we may take $\ell$ to be any positive Lipschitz function on $\R$.  It is relatively easy to check \eqref{nphase1} and \eqref{nphase2}.  


[Proof of third point.] For each $j \leq \kappa$, let $r_j(x) :=  |\phi'(x) / \phi^{(j+1)}(x)|^{1/j}$.  Consider the partition of unity generated by $h$ using Lemma~\ref{fancypart}. 
Assuming \eqref{showlip} holds, Proposition~\ref{basicestprop3} guarantees that on every interval $I \in {\mathcal G}$ either $\sup_{x \in I} r_{\kappa} (x) \approx \inf_{x \in I} r_{\kappa}(x)$ or $\inf_{x\in I} r_{\kappa}(x) \gtrsim |I| \approx h(x)$ for $x\in I$.  In particular, then, 
$$\min \{ \inf_{y \in I} r_{\kappa}(y), |I| \} \approx \min \{ r_\kappa(x), h(x) \}$$
for all $x \in I$. Consequently, using our partition of unity and rescaling, we may construct a $1$-Lipschitz function $\tilde h$ on $\Omega$ with 
\[\tilde h(x) \approx \sum_{I \in {\mathcal G}} \min \{ \inf_{y \in I} r_{\kappa}(y), |I| \} \psi_I(x)  \approx \min \{ r_\kappa(x), h(x) \},  \]
and clearly $\tilde h(x)=0$ on $\Omega^c$.


Now let $K$ be large and consider the minimum
\[ \ell(x) := \frac 1 K \min\{ r_1(x), \ldots, r_{\kappa-1}(x), \tilde h(x) \}. \]
The claim is that this function is Lipschitz as well.  Differentiating,
\[ \frac{d}{dx} \left| \frac{\phi'(x)}{\phi^{(j+1)}(x)} \right|^{\frac{1}{j}} = \frac{1}{j} \left[ \frac{\phi''(x)}{\phi'(x)} - \frac{\phi^{(j+2)}(x)}{\phi^{(j+1)}(x)} \right] r_j(x), \]
so it follows that $|r_j'(x)| \leq \frac{1}{j} ( r_j(x) / r_1(x)) + \frac{1}{j}(r_j(x)/r_{j+1}(x))^{j+1}$.  For a fixed $x$, if the minimum in the definition of $\ell(x)$ equals $r_j(x)$ then it follows from the above estimate that $|r_j'(x)| \lesssim 1$ (since $r_j$ is dominated by $r_{j'}$ for $j' \neq j$).  Thus for some absolute constant $C$ the following holds: given any $x\in \Omega$, on a small neighborhood of $x$ we have $\ell(y) - \ell(x) \le \frac{C}K |y-x|$ (We do not include absolute values on the left-hand side of this inequality because the minimizing $j$ may differ at $x$ and $y$, but we nevertheless get an upper bound on $\ell(y)$.) From here, it is not hard to see that $\ell$ is $C/K$-Lipschitz in every connected component of $\Omega$. On the other hand, since $\tilde h$ vanishes outside $\Omega$, so does $\ell$. Therefore $\ell$ is $C/K$-Lipschitz on $\R$, and by choosing $K$ large we may assume that $\ell$ is $1$-Lipschitz.

By definition of $\ell$, it is clear that $\ell$ satisfies \eqref{nphase1} and $\ell$ vanishes on $\partial \Omega$. 
It only remains to show that $\ell$ satisfies \eqref{nphase2} when \eqref{showlip} is assumed to hold.  
When $|x-y|\lleq \ell(x)$ (with appropriate constant such that $|x-y|\lleq h(x)$ as required by \eqref{showlip}), we have
\begin{align*}
|\phi^{(\kappa+1)}(x) - &  \phi^{(\kappa+1)}(y)| \\ & \leq  \left| \frac{\phi^{(\kappa+1)}(x)}{\phi'(x)} - \frac{\phi^{(\kappa+1)}(y)}{\phi'(y)} \right| |\phi'(x)| + \left| \frac{\phi^{(\kappa+1)}(y)}{\phi'(y)} \right| |\phi'(x) - \phi'(y)|.
\end{align*}
Note that by ensuring that the implicit constant in $|x-y|\lleq \ell(x)$ is small we may assume without loss of generality that $|x-y| < |I|/4$ where $I$ is the interval in the collection $\mathcal G$ generated by $h$ that contains $x$. In particular we'll have $[x,y] \subset \Omega$. Hence using the Mean Value Theorem, $\kappa \ge 1$, and \eqref{nphase1}, we have
\begin{align*}
|\phi^{(\kappa+1)}(x) - &  \phi^{(\kappa+1)}(y)|  \lesssim \frac{|x-y|^{\alpha}|\phi'(x)|}{\ell(x)^{\kappa + \alpha}} + \frac{|\phi''(z)| |x-y|}{\ell(y)^\kappa} \\
& \lesssim \frac{|x-y|^{\alpha}|\phi'(x)|}{\ell(x)^{\kappa+\alpha}} + \frac{|\phi'(z)| |x-y|}{\ell(y)^\kappa \ell(z)} \ \ ,
\end{align*}
for some $z$ between $x$ and $y$. Assuming $|x-y|  < \frac 1 2 \ell(x)$, the fact that $\ell$ is $1$-Lipschitz guarantees that
$\ell(x)\approx \ell(y) \approx \ell(z),$ 
and using part (1) of Proposition~\ref{basicestprop2} we also have  
$\phi'(x) \approx \phi'(y) \approx \phi'(z).$ 
Consequently, we obtain \eqref{nphase2}.
\end{proof}


\begin{proposition}\label{basicprop2} Let $\phi$ be a polynomial of degree $d+1$ with $d\ge 1$.  Fix any nonempty $\Omega \subset \{\phi'>0\}$. Then the function $\ell$ which equals the minimum of the distances to the zeros of $\phi'$ and the boundary of $\Omega$ is admissible, and the implied constants depend only on the degree.
\end{proposition}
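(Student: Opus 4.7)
The plan is to verify directly the three bullet points of admissibility for
\[ \ell(x) := \min\{d(x,\partial\Omega),\,|x-\alpha_1|,\ldots,|x-\alpha_d|\}, \]
where $\alpha_1,\ldots,\alpha_d$ are the complex zeros of $\phi'$. First I would note that $\ell$ is nonnegative and $1$-Lipschitz as the pointwise minimum of such functions, and that since $\phi'>0$ on $\Omega$ none of the $\alpha_j$ lies in $\Omega$, so the inequality $\ell(x) \leq d(x,\partial\Omega)$ guarantees both vanishing on $\partial\Omega$ and the technical fact that $|x-y| < \ell(x)$ implies $[x,y] \subset \Omega$.

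For \eqref{nphase1}, I would factor $\phi'(x) = c\prod_{i=1}^d(x-\alpha_i)$ and use the combinatorial formula for the $j$-th derivative of a product,
\[ \phi^{(j+1)}(x) = c\, j! \sum_{|S|=j}\prod_{i\notin S}(x-\alpha_i), \]
to obtain $|\phi^{(j+1)}(x)| \leq j!\binom{d}{j}|c|\prod_i|x-\alpha_i|\,\ell(x)^{-j} = j!\binom{d}{j}|\phi'(x)|\,\ell(x)^{-j}$, the final step using $|x-\alpha_i|\geq\ell(x)$ for every $i$. This is exactly \eqref{nphase1} with constants depending only on $d$.

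For \eqref{nphase2}, I would first note that a nonnegative $1$-Lipschitz function automatically has property (L) (if $\sup_J h > 2\inf_J h$ then $\tfrac12\sup_J h < \sup_J h - \inf_J h \leq |J|$). Feeding $h=\ell$ and $g=\phi'$ into Proposition~\ref{basicestprop2}, using \eqref{nphase1} at $j=1$ as the hypothesis, yields $\phi'(x)\approx\phi'(y)$ whenever $|x-y|\lleq\ell(x)$, and the Lipschitz property of $\ell$ gives $\ell(x)\approx\ell(y)$ on the same scale. It then remains to estimate $|\phi^{(\kappa+1)}(x)-\phi^{(\kappa+1)}(y)|$. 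If $\kappa+1 > d$ this difference is identically zero and there is nothing to show; otherwise, having arranged that $[x,y]\subset\Omega$, I would apply the mean value theorem together with \eqref{nphase1} at $j=\kappa+1$ to bound the difference by a constant times $|x-y|\,|\phi'(x)|\,\ell(x)^{-(\kappa+1)}$. Multiplying by $\ell(x)^{\kappa+\alpha}|x-y|^{-\alpha}$ and absorbing the factor $|x-y|^{1-\alpha}\leq\ell(x)^{1-\alpha}$ then yields \eqref{nphase2}.

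The argument is essentially algebraic and I do not anticipate a genuine obstacle; the only care required is bookkeeping for degenerate cases ($\kappa+1\geq d$, where higher derivatives vanish or are constant, and the endpoint values $\alpha\in\{0,1\}$, where either the Hölder exponent or the exponent $1-\alpha$ becomes trivial).
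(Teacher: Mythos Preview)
Your argument is correct and follows the same route as the paper: the combinatorial identity $\phi^{(j+1)}(x)=c\,j!\sum_{|S|=j}\prod_{i\notin S}(x-\alpha_i)$ yields \eqref{nphase1} for every $j\ge 1$, and \eqref{nphase2} then comes (when $\kappa+1\le d$) from the Mean Value Theorem together with \eqref{nphase1} at level $j=\kappa+1$. Your write-up is in fact more explicit than the paper's on the \eqref{nphase2} step, supplying the comparability $\phi'(z)\approx\phi'(x)$, $\ell(z)\approx\ell(x)$ for the intermediate point; note only that Proposition~\ref{basicestprop2} as stated concludes about partition intervals rather than arbitrary nearby pairs, though the pointwise statement you use follows immediately (and the paper invokes it the same way in the proof of Proposition~\ref{basicprop1}).
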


\begin{proof}

It suffices to assume $\phi'(z) := p(z) := \prod_{j=1}^d (z - z_j)$.  Let $\ell_0(z)$ denote the distance from $z \in \C$ to the nearest $z_i$ for $i=1,\ldots,d$.   Then if we let $S$ range over all subsets of $\{1,\ldots,d\}$ with cardinality $d-k$, we have
\[ p^{(k)}(z) = k! \sum_{\# S = d-k} \prod_{j \in S} (z - z_j). \]
But $|\prod_{j \in S} (z-z_j)| \leq (\ell(z))^{-k} |p(z)|$, so counting subsets gives
\[ \frac{|p^{(k)}(z)|}{|p(z)|} \leq \frac{d!}{(d-k)!}\ell_0(z)^{-k}. \]
Since $\ell_0$ is clearly $1$-Lipschitz on the real line, we have \eqref{nphase2} for all $j$ with constants depending only on the degree (since the constants can be taken to vanish for $j$ sufficiently large). Setting $\ell(x) := \min (\ell_0(x), d(x,\partial \Omega))$ yields a $1$-Lipschitz function which vanishes on the boundary of $\Omega$ and automatically satisfies the pointwise bounds required for admissibility.
\end{proof}

\section{Proofs of Theorems \ref{flashythm2} and \ref{flashythm}} \label{s.proof}

We come now to the proofs of the main theorems. 
We begin in Section \ref{ss.ibp} with a lemma which is a variation on the usual integration-by-parts argument. After this, we introduce the partition of unity from Lemma \ref{fancypart} to decompose the problem, estimate the various pieces, and conclude the proofs.

\subsection{Frequency decomposition of oscillating phases} \label{ss.ibp}
In this section we fix a closed interval $I\subset \R$ of finite length. As usual let $\kappa$ be a nonnegative integer and $\alpha \in [0,1]$. Our main lemma is as follows:
\begin{lemma}\label{ibplem}Assume $w \in C^{\kappa,\alpha}(\R)$ is supported in $I$ such that 
\begin{equation}\label{ihypw}
\left\{
\begin{aligned}|w^{(j)}(x)| & \lesssim  |I|^{-j},  ~ 0 \leq j \leq \kappa,  \\
|w^{(\kappa)}(x) - w^{(\kappa)}(y)| & \lesssim |I|^{-\kappa- \alpha} |x-y|^{\alpha} ,
\end{aligned} \right.
\end{equation}
for implicit constants independent of $x \in I$.  Assume also that $\phi \in C^{\kappa+1,\alpha}(I)$  is a real valued function, and let $\Lambda$ be a finite constant such that
\begin{equation}
 | \phi^{(j+1)}(x)| \lesssim \Lambda |I|^{-j}, ~ 1 \leq j \leq \kappa, \label{ihyp}
 \end{equation}
 (where we interpret this condition to be vacuous when $\kappa = 0$) and
 \begin{equation}
  |\phi^{(\kappa+1)}(x) - \phi^{(\kappa+1)}(y)| \lesssim \Lambda |I|^{-\kappa-\alpha} |x-y|^{\alpha}, \label{ihyp2}
\end{equation}
with implicit constants in both cases that are independent of $x, y \in I$ and $\Lambda$.
 Then there is a function $E_I$ such that,  for any positive $m$,
\begin{align}
 | e^{i \phi(x)} w(x) - E_I(x) | & \lesssim_m \frac{(\Lambda |I|)^{-\kappa-\alpha} }{(1 +   \Lambda d(x,I))^m} \label{approx}
 \end{align}
and $\widehat{E_I}(\xi) \equiv 0$ at all frequencies $\xi$ for which $\inf_{x \in I} |\xi - \frac{\phi'(x)}{2 \pi}| > \frac{\Lambda}{4 \pi}$.  The implicit constant in \eqref{approx} depends only on $m$ and the constants in \eqref{ihypw}--\eqref{ihyp2}.
\end{lemma}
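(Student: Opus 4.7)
The aim is to produce a function $E_I$ which behaves essentially like $e^{i\phi}w$ on $I$ while having Fourier support confined to the frequency band $J_I := \{\xi : \inf_{x \in I} |\xi - \phi'(x)/(2\pi)| \le \Lambda/(4\pi)\}$. This $J_I$ is an interval of length $\lesssim \Lambda$ centered on the range of instantaneous frequencies $\phi'(I)/(2\pi)$, which is forced by the hypotheses \eqref{ihyp}--\eqref{ihyp2} to have diameter $\lesssim \Lambda$.

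I would pick $x_0$ to be the midpoint of $I$ and factor out the linear Taylor part of $\phi$ at $x_0$: set $Q(x) := \phi(x_0) + \phi'(x_0)(x - x_0)$, $\psi := \phi - Q$, and $W := e^{i\psi}w$, so that $e^{i\phi}w = e^{iQ}W$, $W$ is supported in $I$, $\psi(x_0) = \psi'(x_0) = 0$, and $|\psi^{(j)}(x)| \lesssim \Lambda|I|^{1-j}$ on $I$ for $1 \le j \le \kappa+1$ with the corresponding H\"older bound at the top. Then choose a Schwartz bump $\chi$ with $\operatorname{supp}\chi \subseteq [-\Lambda/(4\pi), \Lambda/(4\pi)]$, $\chi \equiv 1$ in a neighborhood of the origin, and all derivatives vanishing at $0$ (so that $\check\chi$ is Schwartz at scale $\Lambda^{-1}$, $\int y^j \check\chi(y)\,dy = 0$ for every $j \ge 1$, and $\int\check\chi = 1$). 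Define $E_I(x) := e^{iQ(x)}(\check\chi * W)(x)$. The Fourier-support requirement is then immediate, because modulation by $e^{iQ}$ shifts the Fourier support of $\check\chi * W$ by $\phi'(x_0)/(2\pi)$, which lies in $\phi'(I)/(2\pi)$, so $\widehat{E_I}$ is supported in $\phi'(x_0)/(2\pi) + [-\Lambda/(4\pi), \Lambda/(4\pi)] \subseteq J_I$.

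The crux of the proof is the pointwise bound
\[ |W(x) - (\check\chi * W)(x)| \lesssim_m (\Lambda|I|)^{-\kappa-\alpha}(1+\Lambda d(x,I))^{-m}. \]
Rewriting the difference as $\int \check\chi(y)[W(x) - W(x-y)]\,dy$ and exploiting the vanishing moments of $\check\chi$, I expect the principal technical obstacle to appear here: the na\"\i ve bound via the global H\"older seminorm $[W^{(\kappa)}]_{C^\alpha} \lesssim \Lambda^{\kappa+\alpha}$ together with $\int|\check\chi(y)||y|^{\kappa+\alpha}\,dy \lesssim \Lambda^{-\kappa-\alpha}$ only gives $O(1)$, missing the required $(\Lambda|I|)^{-\kappa-\alpha}$ by a factor of $(\Lambda|I|)^{\kappa+\alpha}$. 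The extra smallness must come from separating the two relevant scales: the chirp $e^{i\psi}$ oscillates on the scale $\Lambda^{-1}$ of $\check\chi$, while the amplitude $w$ is only H\"older on the much coarser scale $|I|$. My plan is to expand $\psi(x-y) - \psi(x) = -\psi'(x)y + r(x,y)$ where, for $|y|\lesssim \Lambda^{-1}$, the nonlinear remainder satisfies $|r(x,y)| \lesssim \Lambda|I|^{-1}|y|^2 \lesssim (\Lambda|I|)^{-1}$ (and similar estimates for its iterates up to order $\kappa+\alpha$). The linear part $-\psi'(x)y$ can then be absorbed into a modulated kernel $\check\chi(y)e^{-i\psi'(x)y}$, which still has scale $\Lambda^{-1}$ and vanishing moments but now sees only the amplitude $w(x-y)$, whose genuine H\"older seminorm $[w^{(\kappa)}]_{C^\alpha} \lesssim |I|^{-\kappa-\alpha}$ is the small one; the resulting estimate then yields
\[ [w^{(\kappa)}]_{C^\alpha}\!\int |\check\chi(y)||y|^{\kappa+\alpha}\,dy \;\lesssim\; |I|^{-\kappa-\alpha} \Lambda^{-\kappa-\alpha} \;=\; (\Lambda|I|)^{-\kappa-\alpha}, \]
with the nonlinear chirp-remainder contributions controlled by iterating the expansion to total order $\kappa+\alpha$.

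The spatial-decay factor is handled by a separate argument on the complementary range $d(x,I) \gtrsim |I|$. Since $W$ is supported in $I$ and $|\check\chi(y)| \lesssim_{m'} \Lambda(1+\Lambda|y|)^{-m'}$, one has $|\check\chi*W(x)| \lesssim_{m'} \Lambda|I|(1+\Lambda d(x,I))^{-m'}$. Choosing $m' \ge m + \kappa+\alpha+1$ and using $(1+\Lambda d(x,I)) \gtrsim \Lambda|I|$ on this range absorbs the unwanted $\Lambda|I|$ prefactor into one extra power of $(1+\Lambda d(x,I))^{-1}$ and converts it into the desired $(\Lambda|I|)^{-\kappa-\alpha}(1+\Lambda d(x,I))^{-m}$; for $x$ within $O(|I|)$ of $I$, the factor $(1+\Lambda d(x,I))^{-m}$ is of unit order and the localized pointwise bound from the preceding paragraph already suffices.
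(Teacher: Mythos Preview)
Your overall strategy matches the paper's: define $E_I$ by frequency-projecting $e^{i\phi}w$ onto a band around $\phi'(I)/(2\pi)$, then show the error is small by exploiting that the convolution kernel, once modulated to the local instantaneous frequency $\phi'(x)$, has vanishing moments and hence ``sees'' only the $C^{\kappa,\alpha}$ roughness of $w$ at scale $|I|$ rather than the $\Lambda$-scale oscillation of the chirp. The paper implements exactly this idea, writing $E_I(x)-e^{i\phi(x)}w(x)=\int[\cdots]e^{ih\phi'(x)}\chi(-h)\,dh$ and observing that $h\mapsto he^{-ih\phi'(x)}\chi(h)$ has Fourier transform supported on an annulus of radius $\approx\Lambda$, hence is infinitely integrable; iterated integration by parts then yields the $T^\beta$ bookkeeping which is the rigorous version of your ``iterating the expansion to total order $\kappa+\alpha$''.

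There is, however, a concrete gap in your construction. You take $\operatorname{supp}\chi\subseteq[-\Lambda/(4\pi),\Lambda/(4\pi)]$ and then claim the modulated kernel $K_x(y)=\check\chi(y)e^{-i\psi'(x)y}$ still has unit integral and vanishing moments. But $\int K_x(y)y^j\,dy$ equals (a constant times) $\chi^{(j)}(\psi'(x)/(2\pi))$, so you need $\chi$ to be \emph{flat} at $\psi'(x)/(2\pi)$ for every $x\in I$. The hypotheses only give $|\psi'(x)|=|\phi'(x)-\phi'(x_0)|\le C\Lambda$ with $C$ the implicit constant from \eqref{ihyp}--\eqref{ihyp2}, which you do not control; once $C\ge 1/2$ the point $\psi'(x)/(2\pi)$ can fall outside $\operatorname{supp}\chi$ altogether, and even for smaller $C$ it need not lie in your plateau. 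The paper handles this by using the full allowed bandwidth $\tilde\Lambda:=\Lambda+\operatorname{osc}_I\phi'\approx\Lambda$ for the multiplier, so the plateau region is wide enough to cover all of $\phi'(I)/(2\pi)$. A second, smaller gap: for $x\notin I$ with $d(x,I)\ll |I|$ you assert that $(1+\Lambda d(x,I))^{-m}$ is ``of unit order'', but at $d(x,I)\sim c|I|$ this factor is $\sim(\Lambda|I|)^{-m}$, not $\sim 1$; and your on-$I$ argument does not extend there since $W(x)=0$ and $\psi'(x)$ is undefined. The paper covers this boundary strip by using $|w(y)|\lesssim |y-a|^{\kappa+\alpha}|I|^{-\kappa-\alpha}$ near the endpoint $a$ (from integrating \eqref{ihypw}), which feeds directly into the convolution estimate and produces the correct decay all the way up to $\partial I$.
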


\begin{proof}
We note that the conclusion is trivial when $\Lambda |I| \leq 1$ and also when $\kappa = \alpha = 0$ (in both cases, take $E_I \equiv 0$), so we will assume the contrary. 

To begin, we know that there is a constant $C$ derivable from either \eqref{ihyp} (when $\kappa \neq 0$) or \eqref{ihyp2} (when $\kappa = 0$) such that $|\phi'(x) - \phi'(x')| \leq C \Lambda$ for any two $x, x' \in I$: In the H\"{o}lder case (i.e., \eqref{ihyp2} when $\kappa=0$),  this is immediate from $|x-x'| \leq |I|$.  On the other hand, if \eqref{ihyp} holds with $j=1$, the asserted inequality follows from the Mean Value Theorem.   We note that both cases allow us to establish the slightly stronger inequality $|\phi'(x) - \phi'(x')| \lesssim \Lambda |I|^{-\alpha} |x-x'|^{\alpha}$, which we will use when it is convenient to do so.  Let
\begin{align*}
 \xi_0   := - \frac{1}{2}  \Lambda  +  \inf_{x \in I} \phi'(x), \ \
 \xi_1   := \frac{1}{2}  \Lambda  +  \sup_{x \in I} \phi'(x), \ \ 
 \tilde \Lambda  := \xi_1 - \xi_0.
 \end{align*}
 Because we already know how much $\phi'$ varies over $I$, we know that $ \Lambda \leq \tilde \Lambda \leq (C+ 1) \Lambda$.  Let $\psi$ be any fixed $C^\infty$ function which is identically one on $[\frac{1}{4(C+1)},1-\frac{1}{4(C+1)}]$ and identically zero outside $[0, 1]$.  Now define $E_I$ by
\[ \widehat{E_I}(\xi) := \widehat{e^{i \phi} w}(\xi) \psi \left( 2 \pi \frac{\xi - \xi_0}{\tilde \Lambda} \right). \]
(We use the normalization convention that
\[ \hat f(\xi) := \int e^{-2 \pi i x \xi} f(x) dx, \]
but note that this choice has little significance beyond introducing occasional factors of $\frac{1}{2 \pi}$ to various expressions.) 
Clearly every $\xi$ in the support of $\widehat{E_I}$ has distance at most $\frac{1}{4 \pi} \Lambda$ to $\frac{\phi'(x)}{2 \pi}$ for some $x \in I$.  If that distance happens to be less than $\frac{1}{8 \pi} \Lambda$, we know that $\widehat{E_I}$ will be identically one on a neighborhood of that $\xi$.  Finally, note that we may express $E_I$ as the convolution
\begin{equation} E_I(x) = \int_I e^{i \phi(y)} w(y) \chi(x-y) dy, \label{convof} \end{equation}
where $\hat \chi (\xi) := \psi(2\pi {\tilde \Lambda}^{-1}  (\xi - \xi_0))$.

First we establish \eqref{approx} for the tails.  Suppose $I = [a,b]$.  Integrating the derivative estimates \eqref{ihypw} for $w$ guarantee that $|w(x)| \lesssim  |x-a|^{\kappa+\alpha} |I|^{-\kappa-\alpha}$.  We combine this with the following estimate for the size of $|\chi(y)|$:
\[ |\chi(y)| \lesssim_m \frac{ \tilde \Lambda}{(1 + \tilde \Lambda |y|)^{m+\kappa+\alpha+2}}. \]
This estimate is easily proved by rescaling $\chi$ and noting that $\psi$ is a Schwartz function.  Consequently, the convolution formula \eqref{convof} gives
\[ |E_I(x)| \lesssim_m \int_{I}  \frac{ |x-a|^{\kappa+\alpha} |I|^{-\kappa-\alpha} \tilde \Lambda}{(1 + \tilde \Lambda |x-y|)^{m+\kappa+\alpha+ 2}} dy .\]
Assuming $x < a$, we can make a series of elementary estimates
\begin{align*}  \int_{I} &  \frac{ |x-a|^{\kappa+\alpha}  |I|^{-\kappa - \alpha} \tilde \Lambda}{(1 + \tilde \Lambda |x-y|)^{m+\kappa+\alpha+2}} dy  \leq \frac{|I|^{-\kappa-\alpha}}{(1 + \tilde \Lambda |x-a|)^m} \int \frac{|x-y|^{\kappa+\alpha} \tilde \Lambda}{(1 +  \tilde \Lambda |x-y|)^{\kappa+\alpha+2}} dy \\
& \leq  \frac{|I|^{-\kappa-\alpha}}{(1 + \tilde \Lambda |x-a|)^m} \int \frac{ \tilde \Lambda}{\tilde \Lambda^{\kappa+\alpha} (1 +  \tilde \Lambda |x-y|)^{2}} dy \approx \frac{(\Lambda |I|)^{-\kappa-\alpha}}{(1 +  \Lambda d(x,I))^m}.
\end{align*}
This clearly establishes \eqref{approx} when $x < a$.  The case $x > b$ is symmetric.

Now we consider the case when $x \in I$. We rewrite \eqref{convof} slightly:
\begin{align}
E_I(x) = \int e^{i (\phi(x + h)- h \phi'(x)) } w(x+h) e^{i  h \phi'(x) } \chi(-h) dh. \label{convo}
\end{align}
By construction of $\chi$, we have
\begin{equation} \int e^{i h \phi'(x)} \chi(-h) dh = \int e^{-i h \phi'(x) } \chi(h) dh = \hat \chi \left( \frac{\phi'(x)}{2 \pi} \right) = 1. \label{mean} \end{equation}
Thus we can say
\begin{equation}
\begin{split}
 E_I & (x) - e^{i \phi(x)} w(x) = \\ &  \int \left[ e^{ i (\phi(x + h)- h \phi'(x)) } w(x+h) - e^{i \phi(x)} w(x) \right] \left[ e^{ i  h \phi'(x) } \chi (-h) \right]  dh.
 \end{split} \label{itsadiff}
 \end{equation}
When $\kappa = 0$, we are ready to finish the lemma.
By a variant of the Mean Value Theorem,
\begin{equation} \left| e^{i (\phi(x +  h) - \phi(x) -  h \phi'(x))} - 1 \right| \leq  |h| \! \sup_{|\delta| \leq |h|} |\phi'(x+\delta) - \phi'(x)| \lesssim |h|^{1 + \alpha} \Lambda |I|^{-\alpha}, \label{mvtm} \end{equation}
so that
\begin{align*}
 \left| e^{i (\phi(x + h)- h \phi'(x)) } \right. &  w(x+h)   - \left. e^{ i \phi(x)} w(x) \right| \\
  \leq & ~ |w(x+h) - w(x)| + |w(x)|   \left| e^{i (\phi(x +  h) - \phi(x) -  h \phi'(x))} - 1 \right| \\
  \lesssim &~ |h|^{\alpha} |I|^{-\alpha} + |h|^{1 + \alpha} \Lambda |I|^{-\alpha}.
 \end{align*}
Thus by scaling of $\chi$, we have
\[ |E_I(x) - e^{i \phi(x)} w(x)| \lesssim_m \int \frac{|h|^{\alpha}}{|I|^{\alpha}} \left(1 + \Lambda |h|  \right) \frac{\widetilde {\Lambda}}{(1+|\widetilde \Lambda h|)^m}dh \lesssim (\Lambda |I|)^{-\alpha}. \]
This completes \eqref{approx} when $\kappa = 0$.

When $\kappa > 0$, we will need to integrate by parts.  Let
\[ \eta^{(-1)} (h) := - h e^{- i h \phi'(x)} \chi (h). \]
The Fourier transform of $\eta^{(-1)}$ equals, up to a fixed constant,
\[ \tilde \Lambda^{-1} \psi' \left( \tilde \Lambda^{-1} \left(\xi + \phi'(x) - \xi_0 \right) \right). \]
In particular, it is supported on an annulus $|\xi| \approx \Lambda$. This means that $\eta^{(-1)}$ is an indefinitely integrable Schwartz function, meaning that it equals the $k$-th derivative of some Schwartz function $\eta^{(-k-1)}$ for any $k$.  By scaling, these Schwartz functions satisfy the estimates
\begin{equation} |\eta^{(-k)} (h)| \lesssim_{k,m} \frac{ \tilde \Lambda^{1-k}}{(1 + |\tilde \Lambda h|)^m}, \label{sdec} \end{equation}
including the case $k=1$ (uniformly in the choice of $x \in I$).

The basic technique we will use to establish \eqref{approx} on $I$ is integration-by-parts, but in a somewhat nonstandard form.  To begin, we develop some simplifying notation.  Let $\beta := (\beta_0, \beta_1,\ldots)$ be any sequence of nonnegative integers which equals zero for all but finitely indices.  We will define $\# \beta := \beta_0 + \sum_j j \beta_j$ for any such $\beta$, as well as a term $T^{\beta}(y,h)$:
 \[ T^{\beta}(y,h) \! := \eta^{(-\# \beta)}(-h) e^{ i (\phi(y) - (y-x) \phi'(x))} w^{(\beta_0)} (y) ( \phi'(y) - \phi'(x))^{\beta_1} \! \prod_{j \geq 2} (\phi^{(j)}(y))^{\beta_j}, \]
 where we understand $T^{\beta}(y,h) = 0$ when $y \not \in I$ because $w$ is supported on $I$.  We also take the understanding that the product ranges only over those indices $j$ for which $\beta_j \neq 0$.  We return to the expression \eqref{itsadiff} and write
 \begin{align*}
  e^{ i (\phi(x + h)- h \phi'(x)) } & w(x+h) - e^{i \phi(x)} w(x) \\
    = ~ &  h \int_0^1 i ( \phi'(x+\theta h) - \phi'(x)) e^{ i (\phi(x + \theta h)- \theta h \phi'(x)) } w(x+ \theta h) d \theta \\
    &  + h \int_0^1 e^{ i (\phi(x + \theta h)- \theta h \phi'(x)) } w'(x + \theta h) d \theta,
 \end{align*}
 which, when substituted back into \eqref{itsadiff} gives
 \begin{align}
E_I & (x) - e^{i \phi(x)} w(x)
   =   \int \int_0^1  \left[  T^{(1,0)}(x+\theta h,h) \right.  \left. + i T^{(0,1)}(x+\theta h, h) \right] d \theta dh, \label{fulldiff}
\end{align}
where $(1,0) = (1,0,0,0,\ldots)$ and $(0,1) = (0,1,0,0,0,\ldots)$.  If we let $\epsilon_j$ be the sequence which equals $-1$ at $j$, $+1$ at $j+1$, and zero everywhere else, then integrating by parts (integrating $\eta^{(-\#\beta)}(-h)$ with respect to $h$ and differentiating everything else with respect to $h$) gives that
\begin{equation}
\begin{aligned}
\int T^{\beta} & (x+\theta h,h) d h \\ & = \theta \int \left[  T^{\beta + (1,0)} + i T^{\beta + (0,1)} + \sum_{j \geq 1} \beta_j T^{\beta + \epsilon_j} \right] (x + \theta h,h) dh. 
\end{aligned} \label{ipbfact}
\end{equation}
(In particular, note that $T^{\beta + \epsilon_j}$ appears in the sum with nonzero coefficient only when $\beta + \epsilon_j$ has all nonnegative values.)
Since we know that \eqref{fulldiff} holds,
and we know that integrating $T^{\beta}$ by parts introduces new terms $T^{\beta'}$ with $\# \beta' = \# \beta + 1$,  we may repeatedly and selectively integrate individual terms on the right-hand-side of \eqref{fulldiff} 
until we arrive at an equality of the form 
\begin{equation}  E_I(x) - e^{i \phi(x)} w(x) = \int_0^1 \int \sum_{\beta \in B} C_{\beta,\kappa} \theta^{\#\beta - 1} T^{\beta} (x + \theta h,h)  dh d \theta \label{bind} \end{equation}
 with the sum over $\beta$ ranging only over indices $B$ which have at least one of the three following properties:
 \begin{itemize}
 \item The element $\beta$ equals $(\kappa,0,0,\ldots)$
 \item The element $\beta$ is zero everywhere except position $\kappa+1$, where it equals $1$.
 \item The element $\beta$ satisfies $\beta_0 + \beta_1 + \sum_{j \geq 2} (j-1) \beta_j \geq \kappa + \alpha$.
 \end{itemize}
 We prove this by induction. 
 Let $B_{1} := \{ (1,0), (0,1) \}$.  By \eqref{fulldiff}, we have \eqref{bind} with $B$ replaced by $B_1$.  We inductively build index sets $B_2,\ldots,$ so that the corresponding analogue of \eqref{fulldiff} holds for each $B_j$.  The rule is as follows:  any index in $B_j$ will belong to $B_{j+1}$ when it satisfies any of the three properties above (this corresponds to not integrating this term by parts).  Any term in $B_j$ which does not satisfy any of the three properties will be replaced in $B_{j+1}$ by the corresponding union of terms from the right-hand side of \eqref{ipbfact}.  One might worry that terms involving $w^{(\kappa)}$ or $\phi^{(\kappa+1)}$ cannot be integrated by parts since we do not assume that subsequent derivatives exist. This is not a problem, since any such term must automatically satisfy at least one of the three properties: if either $\beta_0 > \kappa - 1$ or $\beta_{\kappa+1} \neq 0$, then the only potential cases in which the third property fails are {\it exactly} the first two cases.  Thus, as soon as a term arises involving $w^{(\kappa)}$ or $\phi^{(\kappa+1)}$, it will satisfy one of the three properties and thus will not be integrated by parts any further.  We also note that the process stabilizes after $2\kappa+2$ steps, i.e., $B_{j+1} = B_{j}$ for any $j \geq \kappa+1$.  This is because when $T^{\beta}$ is integrated by parts, it is replaced by terms $T^{\beta'}$ which have $\# \beta' = \# \beta + 1$.  Since $\beta_0 + \beta_1 + \sum_{j \geq 2} (j-1) \beta_j \geq \frac{1}{2} \# \beta$, any term which is integrated by parts $2 \kappa+2$ times for failing to satisfy any of the three properties will then satisfy the third property.

When the third property is satisfied, we can estimate $T^{\beta}(x+\theta h, h)$ (assuming without loss of generality that $x + \theta h \in I$) using \eqref{ihypw}, \eqref{ihyp}, and \eqref{sdec}:
\begin{align*} | T^{\beta}(x+\theta h,h)| & \lesssim_m \frac{\tilde \Lambda^{1-\# \beta}}{(1 + |\tilde \Lambda h|)^m} |I|^{-\beta_0} ( |\theta h| \Lambda |I|^{-1} )^{\beta_1} \prod_{j \geq 2} \left( \Lambda |I|^{-j+1} \right)^{\beta_j}, \\
& \lesssim_m \frac{\tilde \Lambda^{1+ \beta_1} |\theta h|^{ \beta_1}}{(1 + |\tilde \Lambda h|)^m} (\Lambda |I|)^{-\beta_0} ( \Lambda |I|)^{- \beta_1} \prod_{j \geq 2} ( \Lambda |I|)^{-(j-1)\beta_j}.
\end{align*}
(We have implicitly used the fact that $\kappa \neq 0$ to conclude that $|\phi'(x) - \phi'(x')| \lesssim \Lambda |I|^{-1}  |x-x'|$ on $I$.) 
We conclude that
\[ \int \int_0^1 |T^{\beta}(x+\theta h,h)| d \theta dh \lesssim  (|I| \Lambda)^{- \kappa-\alpha }. \]
Note that the total number of $\beta$s satisfying the third condition above is finite depending only on $\kappa$, so summing over all such $\beta$ gives an estimate at least as good as \eqref{approx}.

Thus the two specific terms corresponding to the first two acceptable properties of $\beta \in B$ are the only remaining terms to consider, and in both cases we will use the H\"{o}lder condition on the highest derivatives of $w$ and $\phi$, respectively. In the first case, just as in \eqref{mvtm}, we have
\begin{align*}  \left| e^{i (\phi(x + \theta h) - \theta h \phi'(x))} w^{(\kappa)}(x + \theta h) \right. & - \left. e^{i \phi(x)} w^{(\kappa)}(x) \right| 
\\ \leq |w^{(\kappa)}(x + \theta h) - w^{(\kappa)}(x)| & + |w^{(\kappa)}(x)| \left| e^{ i (\phi(x + \theta h) - \phi(x) - \theta h \phi'(x))} - 1 \right| \\
& \lesssim (\theta |h|)^{\alpha} |I|^{-\kappa - \alpha} + (\theta |h|)^{1 + \alpha} \Lambda |I|^{-\kappa - \alpha}.
\end{align*}
Now, since $\int \eta^{(-\kappa)}(-h) dh = 0$, this inequality immediately implies that
\begin{align*}
 \left| \int_0^1 \int T^{(\kappa,0,\ldots)}(x + \theta h, h) d h d \theta \right| &\\
  \leq \int_0^1 \int  |T^{(\kappa,0,\ldots)} & (x+\theta h, h) - T^{(\kappa,0,\ldots)}(x,h) | dh d \theta \\ \lesssim \int |\eta^{(-\kappa)}(-h)| & ( |h|^{\alpha} |I|^{-\kappa - \alpha} + |h|^{1 + \alpha} \Lambda |I|^{-\kappa - \alpha}) dh \\
 & \lesssim (\Lambda |I|)^{-\kappa - \alpha}.
 \end{align*}
 In the second case, we use
 \begin{align*}  \left| e^{i (\phi(x + \theta h) - \theta h \phi'(x))} \phi^{(\kappa+1)}(x + \theta h) \right. & - \left. e^{i \phi(x)} \phi^{(\kappa+1)}(x) \right| \\ \lesssim  & \Lambda (\theta |h|)^{\alpha} |I|^{-\kappa - \alpha} + (\theta |h|)^{1 + \alpha} \Lambda^2 |I|^{-\kappa - \alpha}
\end{align*}
 instead. The difference is an extra factor of $\Lambda$, but this is offset by the fact that $\# \beta = \kappa+1$ for this term, so integrating against $\eta^{(-\kappa-1)}$ instead of $\eta^{(-\kappa)}$ exactly cancels this extra $\Lambda$.  Thus the proof is complete.
\end{proof}

\subsection{Main decomposition} \label{ss.decomp}
In this final subsection, we invoke Lemmas \ref{fancypart} and \ref{ibplem} to prove Theorems \ref{flashythm2} and \ref{flashythm}. 
We first make several reductions. We first observe that the case $\gamma = 0$ is trivial from the boundedness of $C_{-}$ on $L^p$ for $1 < p < \infty$, so we specifically assume $\gamma > 0$.  We will prove the theorems under the assumption $t=1$ since all hypotheses are invariant under positive rescalings of $\phi$.
Restricting $\Omega$ as necessary, we may assume that the chosen admissible $\ell$ does not vanish on $\Omega$, since if it did, any $f$ for which the right-hand side \eqref{wanted} could be interpreted as finite would have to vanish almost everywhere on the set where $\ell$ vanishes.  It must also be the case that $(\phi' \ell)^{-\gamma} \in L^p(\Omega)$, and since $\phi' \ell$ is nonzero without loss of generality and approximately constant on the intervals $I \in {\mathcal G}$ generated by $\ell$, we must have that $f$ is locally in $L^p$ in $\Omega$.

Given an admissible $\ell$, we generate a partition of unity as provided by Lemma~\ref{fancypart}, and we restrict our attention to those elements ${\mathcal G}_+ \subset {\mathcal G}$  contained in $\Omega$  (elements of the partition must be either contained in $\Omega$ or its complement since $\ell$ is nonvanishing on every interval $I \in {\mathcal G}$ but vanishing on $\partial \Omega$).  By Proposition~\ref{basicestprop2} and \eqref{nphase1}--\eqref{nphase2}, $\phi'$ and  $\ell$ are both approximately constant on every $I \in {\mathcal G}$.  For each interval, we define $\Lambda_I := \inf_{x \in I} \phi'(x)$. We will further divide $\mathcal G_+$ as follows: let the collection of intervals in ${\mathcal G}_+$ with $\Lambda |I| \geq 1$ be called ${\mathcal G}_L$ and let the rest of $\mathcal G_+$ belong to the collection ${\mathcal G}_T$ (``L'' for the fact that the product is large, and ``T'' for the fact that $E_I$ is trivial). On each interval $I$, we construct a band-limited function $E_I$ meant to approximate $(\phi')^{\gamma} e^{i \phi}$:
\begin{itemize}
\item When $I \in {\mathcal G}_L$, we will apply Lemma \ref{ibplem} on the interval $I$ to construct a function $E_I$ with Fourier support contained in $[\frac{1}{4 \pi} \Lambda_I, \infty)$ such that
\begin{equation} 
\left| e^{i \phi(x)} (\phi'(x))^{\gamma} \psi_I(x) - E_I(x) \right| \lesssim_m \frac{|I|^{-\gamma}}{(1 + \Lambda_I d(x, I))^m}. \label{lemout} \end{equation}
(Specifically, apply Lemma \ref{ibplem} with $w := |\Lambda_I|^{-\gamma} (\phi'(x))^{\gamma} \psi_I$ and rescale by $|\Lambda_I|^{\gamma}$ {\it ex post}.)  
\item When $I \in {\mathcal G}_T$, we take $E_I \equiv 0$. 
\end{itemize}

Next, without loss of generality we may assume that $f$ vanishes outside $\Omega$. Define
\[ F(x) := (\phi'(x))^{-\gamma} f(x) \ind_{\Omega}(x). \]
On each interval $I \in {\mathcal G}_L$, we will write $F$ as a sum $F = F^I + F_I$,where $F_I$ has Fourier transform supported in $[-\frac{1}{8 \pi} \Lambda_I, \frac{1}{8 \pi} \Lambda_I]$ (the ``low frequency part'') and $F^I$ is the ``high frequency'' complement.  The details of this decomposition will be explained shortly.
Using this notation, we expand
\begin{align}
 e^{i \phi} \ind_{\Omega} f & = \sum_{I \in {\mathcal G}} e^{i \phi} (\phi')^{\gamma} \psi_I F  =: S_1 + S_2 + S_3 + S_4 \label{decomp}
 \end{align}
 with the sums $S_1$--$S_4$ being given formally by
  \begin{align*}
  S_1  & := \sum_{I \in {\mathcal G}_T} e^{i \phi} (\phi')^{\gamma} \psi_I F,  & S_2 & := \sum_{I \in {\mathcal G}_L} \left[ e^{i \phi} (\phi')^{\gamma} \psi_I - E_I \right] F_I, \\
  S_3 & := \sum_{I \in {\mathcal G}_L} e^{i \phi} (\phi')^{\gamma} \psi_I F^I,  & S_4 & := \sum_{I \in {\mathcal G}_L} E_I F_I.
  \end{align*}
The sum $S_1$ is trivial: $|S_1| \lesssim |f| \ind_{|\phi'| \ell \lesssim 1}$, so trivially
\[ ||S_1||_{L^p(\R)} \lesssim \left| \left| \frac{f}{(\phi' \ell)^{\gamma}} \right| \right|_{L^p(\Omega)} \]
uniformly in $f$, $p$, etc.  The sum $S_4$ is also trivial since $E_I F_I$ has Fourier support on the positive half-line.  If we assume that $f \in L^p$ and prove the the sums $S_2$ and $S_3$ belong to $L^p$ as well, then $S_4$ will be an $L^p$ function with Fourier support on the positive half-line, so we will simply have
\[ C_{-} S_4 = 0. \]
The difficult terms are $S_2$ and $S_3$.  We consider $S_2$ first.  Let us assume that $F_I$ has been chosen so that
\begin{equation} |F_I(x)| \lesssim_m \int \frac{\Lambda_I |f(y)| dy}{(1 + \Lambda_I |x-y|)^m}. \label{kernelest} \end{equation}
By virtue of the triangle inequality, we have that
\[ 1 + \Lambda_I d(y,I) \leq (1 + \Lambda_I |x-y|) (1 + \Lambda_I d(x,I)), \]
so there is a corresponding integral inequality:
\begin{align*}
\frac{1}{(1 + \Lambda_I d(x,I))^{m}}  \! \int  \! \! \frac{\Lambda_I |f(y)| dy}{(1 + \Lambda_I |x-y|)^{2m}} \lesssim_m  \int \! \! \frac{\Lambda_I |f(y)| dy }{(1 + \Lambda_I |x-y|)^m (1 + \Lambda_I d(y,I))^m}.
\end{align*}
We will combine this inequality with \eqref{lemout} and the fact that
\[ \int |h(x)| \int \frac{\Lambda_I |g(y)| dy}{(1 + \Lambda_I |x-y|)^m}dx \lesssim_m \int |g(x)| (Mh)(x)dx, \]
where $M$ is the Hardy-Littlewood maximal operator (which is bounded on $L^p$ for $p > 1$). By duality, we come to the conclusion that
\[ ||S_2||_{L^p(\R)} \lesssim_m \left| \left| \sum_{I \in {\mathcal G}_L} \frac{|f|  |I|^{-\gamma}}{(1 + \Lambda_I d(\cdot, I))^m} \right| \right|_{L^p(\Omega)} \lesssim \left| \left| \frac{f}{(\phi' \ell)^{\gamma} } \right| \right|_{L^p(\Omega)} \]
once we establish the following estimate:
\begin{lemma}  Suppose $b < 0$.   Then
\begin{equation}
\sum_{I \in {\mathcal G}_L}  \frac{|I|^{b}}{(1 + \Lambda_I d(x,I))^m} \lesssim  (\ell(x))^b. \label{sumelmma}
\end{equation}
for all $m \geq 1 - b$. 
\end{lemma}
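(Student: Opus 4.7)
The argument rests on two structural features of $\mathcal G$ from Lemma~\ref{fancypart}: (i) $|I|\approx \ell(y)$ for every $y\in I$, and (ii) essential disjointness of the intervals, which bounds by $O(1+R/2^k)$ the number of scale $\approx 2^k$ intervals in $\mathcal G$ within distance $R$ of $x$. I also use (iii) $\Lambda_I\ge |I|^{-1}$ by the defining condition of $\mathcal G_L$, and (iv) the $1$-Lipschitz property of $\ell$, which combined with (i) gives $|I|\lesssim \ell(x)+d(x,I)$ for every $I\in\mathcal G$.

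I first split the collection into ``large'' intervals with $|I|\gtrsim \ell(x)$ and ``small'' intervals with $|I|\ll \ell(x)$. For large intervals, $|I|^b\lesssim \ell(x)^b$ because $b<0$, so it suffices to bound the residual sum of $(1+\Lambda_I d(x,I))^{-m}$. Grouping dyadically by scale $|I|\approx 2^k\gtrsim \ell(x)$ and distance $d(x,I)\approx 2^j$, the count (ii) combined with $m\ge 1-b>1$ makes each scale contribute an $O(1)$ factor; then $\sum_{k\gtrsim \log_2\ell(x)} 2^{kb}$ is a convergent geometric series (since $b<0$) dominated by its first term $\approx \ell(x)^b$.

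For small intervals, (iv) forces $d(x,I)\gtrsim \ell(x)$, and (iii) gives $1+\Lambda_I d(x,I)\gtrsim d(x,I)/|I|$. Each term is therefore bounded by $|I|^{b+m}/d(x,I)^m$. Writing $|I|\approx 2^k$ with $k<\log_2 \ell(x)$ and $d(x,I)\approx 2^j$ with $j\gtrsim \log_2 \ell(x)$ and $j>k$, the count from (ii) yields the per-bin estimate $\lesssim 2^{k(b+m-1)+j(1-m)}$. The inner sum in $j$ is a geometric series of ratio less than one (exponent $1-m<0$) summing to $\approx \ell(x)^{1-m}$; the outer sum in $k$ is the geometric series $\sum_{k<\log_2\ell(x)}2^{k(b+m-1)}$, which is dominated by its largest term $\approx \ell(x)^{b+m-1}$ precisely when $b+m-1\ge 0$, i.e., $m\ge 1-b$. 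The product $\ell(x)^{1-m}\cdot \ell(x)^{b+m-1}=\ell(x)^b$ is the desired bound.

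The main technical obstacle is the careful bookkeeping of this double dyadic sum and the identification of the exponent thresholds: the $j$-sum requires $m>1$ (automatic from $b<0$ and $m\ge 1-b$), while the $k$-sum drives the sharp condition $m\ge 1-b$ stated in the lemma, as this is precisely what is needed to dominate the outer geometric series by its top term $\ell(x)^{b+m-1}$ rather than allowing it to accumulate contributions from arbitrarily small Whitney scales near the boundary of $\Omega$.
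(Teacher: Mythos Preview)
Your overall strategy is sound, but there is a genuine gap at the endpoint $m=1-b$, which is explicitly included in the statement. In your small-interval case you bound the contribution at dyadic parameters $(k,j)$ by $2^{k(b+m-1)+j(1-m)}$ via the count $N_{k,j}\lesssim 2^{j-k}$, and then claim that the outer series $\sum_{k<\log_2\ell(x)}2^{k(b+m-1)}$ is dominated by its top term ``precisely when $b+m-1\ge 0$''. This is false when $b+m-1=0$: the ratio is $1$ and there is no lower bound on the Whitney scales near $\partial\Omega$, so the sum diverges. The termwise count $N_{k,j}\lesssim 2^{j-k}$ is simply too crude at the endpoint; it discards the stronger constraint from essential disjointness that the intervals in the distance-$2^j$ annulus have total length $\lesssim 2^j$, i.e.\ $\sum_k N_{k,j}\,2^k\lesssim 2^j$. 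The fix is to use this: for fixed $j$,
\[
\sum_k N_{k,j}\,2^{k(b+m)}
=\sum_k \bigl(N_{k,j}\,2^k\bigr)\,2^{k(b+m-1)}
\le \ell(x)^{\,b+m-1}\sum_k N_{k,j}\,2^k
\lesssim \ell(x)^{\,b+m-1}\,2^j,
\]
using $k<\log_2\ell(x)$ and $b+m-1\ge 0$; dividing by $2^{jm}$ and summing over $j\gtrsim\log_2\ell(x)$ then yields $\ell(x)^b$ for all $m\ge 1-b$, including the endpoint.

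The paper's proof avoids this bookkeeping by a different (and shorter) organization. It splits according to whether $x\in I^*$ (the union of $I$ and its two neighbors) or not. The near part consists of boundedly many intervals with $|I|\approx\ell(x)$. For the far part it uses $(1+\Lambda_I d(x,I))^{-m}\le(\Lambda_I d(x,I))^{-(1-b)}$ together with $\Lambda_I|I|\ge 1$ to get the bound $|I|\,d(x,I)^{\,b-1}\lesssim\int_I|x-y|^{\,b-1}\,dy$, after which essential disjointness converts the sum over $I$ into the single integral $\int_{|y-x|\gtrsim\ell(x)}|x-y|^{\,b-1}\,dy\approx\ell(x)^b$. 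This integral step is exactly the continuous version of the total-length constraint above, and it handles the endpoint automatically.
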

\begin{proof}
For each $I \in {\mathcal G}$, let $I_*$ be the union of $I$ and its left and right neighbors.  For any $m \geq 1 - b$,
\begin{align*}
 \frac{|I|^{b}}{(1 + \Lambda_I d(x,I))^m} &  \leq   |I|^{b} \ind_{I^*}(x) + \frac{ \Lambda_I^{b-1} |I|^{b} \ind_{(I^*)^c}(x)}{(d(x,I))^{1-b}} \leq |I|^{b} \ind_{I^*}(x) + \frac{  |I| \ind_{(I^*)^c}(x)}{(d(x,I))^{1-b}}.
 \end{align*}
Because the intervals $I^*$ have finite overlap, the sum over I of $|I|^{b} \ind_{I^*}(x)$ will be bounded by $(\ell(x))^b$.   Furthermore, because $x \not \in I^*$, we know $d(x,I) \gtrsim \ell(x)$; consequently
\[  \frac{ |I| \ind_{(I^*)^c}(x)}{(d(x,I))^{1-b}} \lesssim \int_{ I } \frac{\ind_{|y-x| \gtrsim \ell(x)}}{|x-y|^{1-b}} dy. \]
Summing over $I$ is now trivial, since
\[ \int \frac{\ind_{|y-x| \gtrsim \ell(x)}}{|x-y|^{1-b}} dy \lesssim (\ell(x))^{b}, \]
which is exactly as desired.
\end{proof}

The final piece of the decomposition \eqref{decomp} is the sum $S_3$.  We will fix a large integer $N$ and a $C^\infty$ function $\Psi$ with rapid decay and integral one whose Fourier transform is supported in $[-\frac{1}{8 \pi},\frac{1}{8 \pi}]$.   For any $h \in \R$, let $D_\delta F(x) := F(x+\delta) - F(x)$.  On each interval $I \in {\mathcal G}_L$, we define
\[ T_{I}^0 F(x) := \int  \Lambda_I \Psi( \Lambda_I u) D_u^N F(x) du. \]
This operator $T_I^0$ is essentially a convolution operator:
\[ T_I^0 F(x) = (-1)^{N} F(x) + \sum_{j=1}^N (-1)^{N-j} \binom{N}{j} \int F(x+j u) \Lambda_I \Psi(\Lambda_Iu) du. \]
We will define the splitting $F = F_I + F^I$ by taking $F_I := F + (-1)^{N+1} T_I^0 F$.  By design, $F_I$ will have Fourier support in $[-\frac{1}{8 \pi} \Lambda_I, \frac{1}{8 \pi} \Lambda_I]$ and will satisfy \eqref{kernelest}.

Now define $\Psi_I(x) := \Lambda_I \Psi(\Lambda_I x) \ind_{|x| \leq c |I|}$ and  \begin{equation} T_{I} F(x) := \int  \Psi_I(u) D_u^N F(x) du. \label{tidef} \end{equation}
The constant $c$ will be chosen sufficiently small so that $Nc|I| < \frac{1}{2} \ell(x)$ whenever $x \in I$.
Because $| \Lambda_I \Psi(\Lambda_I x) - \Psi_I(x)| \lesssim_m (\Lambda_I |I|)^{-m} \Lambda_I (1 + \Lambda_I |x|)^{-m}$, we can follow the derivation already carried out beginning with \eqref{kernelest} to conclude
\[ \left| \left| \sum_{I \in {\mathcal G}_L} e^{i \phi} (\phi')^{\gamma}  \psi_I \left[ T_{I} F - T_{I}^0 F \right] \right| \right|_{L^p(\R)} \lesssim \left| \left| \frac{f}{(\phi' \ell)^{\gamma}} \right| \right|_{L^p(\Omega)}. \]
Thus, it suffices to estimate the $L^p$ norm of the modified term
\[ \tilde S_3 := \sum_{I \in {\mathcal G}_L} e^{i \phi} (\phi')^{\gamma} \psi_I T_I F. \]
The advantage of doing so is that, when $c$ is chosen small enough (depending on Lemma \ref{fancypart}) and when $x \in I$, $T_I F$ will only depend on the pointwise values of $F$ in either $I$ or its left and right neighbors (i.e., the interval previously identified as $I^*$). 

The first feature to observe is that there is an integration by parts which can occur inside $T_I F$.  Specifically, if $F$ has a locally integrable weak derivative on $\Omega$ (by which we mean that the test functions will be compactly supported strictly away from $\partial \Omega$), we will have
\[ D_h F(x) := h \int_{0}^1 F'(x + \theta h) d \theta \]
at every $x \in \Omega$ for $h$ small enough that $[x,x+h] \subset \Omega$.  Plugging into the definition of $T_I$ and changing variables gives that
\begin{equation} T_{I} F (x) = \int_{[0,1]^k} \int \Psi_I(u) u^k D_u^{N-k} F^{(k)}(x+u(\theta_1+\cdots+\theta_k)) du d \theta_1 \cdots d \theta_k.  \label{tiop} \end{equation}
If we choose $N = k$, we will have that $T_I F$ is simply a convolution of $F^{(k)}$ with a kernel $\Psi_I^{(k)}$ given by
\[ \Psi^{(k)}_I(u) := \int_{[0,1]^k} \Psi_I\left(\frac{u}{\theta_1+\cdots+\theta_u} \right) u^{k} \frac{d \theta_1\cdots d\theta_k}{(\theta_1 + \cdots + \theta_k)^{k+1}}. \]
The transformation $\Psi_I \mapsto \Psi_I^{(k)}$ looks mysterious, but since $\theta_i \in [0,1]$, we have that the support of $\Psi_I^{(k)}$ will remain inside $|u| \leq c k |I|$.  It is also clear from \eqref{tiop} that $||\Psi_I^{(k)}||_1 \leq || u^k \Psi_I||_1$.  With only slightly more effort (using Minkowski's inequality), one can conclude that
\[ || \Psi_I^{(k)} ||_p \leq || u^k \Psi_I||_p \int_{[0,1]^k} \frac{d \theta_1 \cdots d \theta_k}{(\theta_1 + \cdots + \theta_k)^{\frac{1}{p'}}}. \]
The integral over the thetas will be finite for every $p < \infty$ (and, in fact, will be finite for $p = \infty$ as long as $k \neq 1$).  We can easily see that $||u^k \Psi_I||_p \approx \Lambda_I^{-k + \frac{1}{p'}}$ so by Young's inequality for convolutions we will have
\[ || \ind_{I} T_I F ||_p \lesssim \Lambda_I^{-k + \frac{1}{q} - \frac{1}{p}} || \ind_{I^*} F^{(k)} ||_q \]
when $\frac{1}{p} \leq \frac{1}{q} \leq 1$.  

Up to this point, the proofs of Theorems \ref{flashythm2} and \ref{flashythm} have coincided since \eqref{wanted2} and \eqref{wanted} share a common $||(\phi' \ell)^{-\gamma} f||_{L^p(\Omega)}$ on the right-hand side. To finish the proofs, we specialize at this point.  The proof of Theorem \ref{flashythm} proceeds by
estimating $\tilde S_3$ under the assumption $k - \gamma + \frac{1}{p} - \frac{1}{q} \geq 0$. We can multiply by $\Lambda_I^{\gamma}$ and exploit the fact that $\Lambda_I |I| \gtrsim 1$:
\begin{align*}
 \Lambda_I^{\gamma}  || \ind_I T_I F||_p &  \lesssim  \Lambda_I^{- ( k - \gamma + \frac{1}{p} - \frac{1}{q})} ||\ind_{I^*} F^{(k)}||_q \lesssim |I|^{k - \gamma + \frac{1}{p} - \frac{1}{q}} || \ind_{I^*} F^{(k)} ||_q. 
 \end{align*}
Now we raise this inequality to the $p$ power and sum over $I$.  Since $q \leq p$, the $\ell^p$ norm of these terms over $I$ will be dominated by the $\ell^q$ norm, so we are able to conclude exactly \eqref{wanted}:
\begin{align*}
 ||\tilde S_3||_p & \lesssim \left( \sum_{I \in {\mathcal G}}  \left( |I|^{ k - \gamma + \frac{1}{p} - \frac{1}{q} } || \ind_{I^*} F^{(k)}||_q \right)^p \right)^{\frac{1}{p}} \\ & \lesssim \left( \sum_{I \in {\mathcal G}}  \left( |I|^{k - \gamma + \frac{1}{p} - \frac{1}{q} } || \ind_{I^*} F^{(k)}||_q \right)^q \right)^{\frac{1}{q}} \\
  & \lesssim || \ell^{ k - \gamma + \frac{1}{p} - \frac{1}{q}} F^{(k)}||_{L^q(\Omega)}.
  \end{align*}

The case of Theorem \ref{flashythm} and \eqref{wanted2} is not much more work.  In this case, we take $k$ to be the largest integer not exceeding $\kappa + \alpha$.  We return to \eqref{tiop} and choose $N = k+1$ (when $k=0$, we can simply return to the definition \eqref{tidef}).  We need to estimate an expression of the form
\begin{equation} \int_{-c |I|}^{c |I|}  \frac{\Lambda_I |u|^k \ind_{|u| \leq c |I|}}{(1 + \Lambda_I |u|)^m} |F^{(k)}(x) - F^{(k)}(x + \theta u)| du \label{expression} \end{equation}
where $\theta$ is some positive real number between $0$ and $k+1$.
The shortest route to the desired estimate is to integrate by parts, integrating the difference of $F^{(k)}$ and differentiating the rest.  We conclude that
\begin{align*}
\int_{-c |I|}^{c |I|}   \frac{\Lambda_I |u|^k \ind_{|u| \leq c |I|}}{(1 + \Lambda_I |u|)^m} & |F^{(k)}(x) - F^{(k)}(x + \theta u)| du \lesssim_{m,k} \\
 & \frac{\Lambda_I |I|^k}{(1 + \Lambda_I |I|)^m} \int_0^{c|I|} |F^{(k)}(x) - F^{(k)}(x + \theta u)| du \\
 + & \frac{\Lambda_I |I|^k}{(1 + \Lambda_I |I|)^m} \int_{-c|I|}^0 |F^{(k)}(x) - F^{(k)}(x + \theta u)| du \\
 + & \int_{0}^{c|I|} \frac{\Lambda_I |s|^{k-1}}{(1 + \Lambda_I |s|)^m} \int_0^s |F^{(k)}(x) - F^{(k)}(x + \theta u)| du ds\\
  + & \int_{-c|I|}^0 \frac{\Lambda_I |s|^{k-1}}{(1 + \Lambda_I |s|)^m} \int_s^0 |F^{(k)}(x) - F^{(k)}(x + \theta u)| du ds.
\end{align*}
If we define
\begin{equation} M_{\beta} F^{(k)}(x) := \sup_{0 < \delta <  \ell(x)} \frac{1}{\delta^{1+\beta}} \int_{-\frac{\delta}{2}}^{\frac{\delta}{2}} |F^{(k)}(x + u) - F^{(k)}(x)| du, \label{smoothHLM} \end{equation}
we find that we are able to bound the expression \eqref{expression} by an implicit constant (depending on $m$, with $k$ considered fixed) times
\[ \frac{\Lambda_I |I|^k}{(1 + \Lambda_I |I|)^m} |I|^{1 + \beta} \theta^{\beta} M_\beta F^{(k)}(x) + \! \int_0^{\infty} \! \frac{\Lambda_I s^{k+\beta} \theta^{\beta} M_\beta F^{(k)}(x)}{(1 + \Lambda_I s)^m} ds \lesssim \frac{ \theta^{\beta} M_\beta F^{(k)}(x)}{\Lambda_I^{k + \beta}} \]
(once we take $m > k + \beta + 1$).
We choose $\beta$ so that $k + \beta = \gamma$.  If $k = 0$, then we apply this estimate to \eqref{tidef} with $\theta = 1$ (since $N=1$ in this case). To finish, multiply by $\Lambda_I^{\gamma}$ and take an $\ell^p$ norm over all intervals $I \in {\mathcal G}_L$ to conclude \eqref{wanted2}.  When $k \neq 0$, we use the same procedure, but we first write
\begin{align*}
\left| \vphantom{\sum_j^j} F^{(k)} \right. & \left. \left(  x + u \sum_{j=1}^k  \theta_j \right) -F^{(k)} \left(x + u+ u\sum_{j=1}^k \theta_j \right) \right| \leq \\
& \left|F^{(k)} \left(x + u \sum_{j=1}^k \theta_j \right) -F^{(k)} (x) \right| + \left|F^{(k)} (x) -F^{(k)} \left(x + u+ u\sum_{j=1}^k \theta_j \right) \right|
\end{align*}
We use Minkowski's inequality for each fixed $(\theta_1,\ldots,\theta_k) \in [0,1]^k$ and estimate each of these two differences exactly as was done when $k=0$.

\section{Appendix on the spaces $X^p_\beta(\Omega)$.}
\label{s.appendix}
We conclude with some further comments about the smoothness spaces in Theorems \ref{t.polynomial} and \ref{flashythm2}.  First, we establish the inequality
\begin{equation} || f||_{X^p_\beta(\Omega)} \lesssim_{p,\beta} ||D^{\beta} f||_{L^p(\R)}. \label{compineq} \end{equation}
When $\beta$ is an integer, $||f||_{X_\beta^p(\Omega)} = ||f^{(\beta)}||_{L^p(\Omega)} \lesssim_{p,\beta} || D^{\beta} f||_{L^p(\R)}$ simply by the observation that the $L^p$-norm of any function is comparable to the $L^p$-norm of its Hilbert transform for $p \in (1,\infty)$.

  If $\beta$ is not an integer, then $\beta = k + \alpha$ for some integer $k$ and $\alpha \in (0,1)$.  We have that $f^{(k)} = I_{\alpha} D^{\beta} f$, where $I_{\alpha}$ is convolution with a kernel $K_{-1+\alpha}$ which is homogeneous of degree $-1+\alpha$.  Taking a pointwise difference, we see that
\begin{align*}
 \left| f^{(k)}(x+u) - f^{(k)}(x)\right|  & = \left| I_{\alpha} D^{\beta} f(x+u) - I_{\alpha} D^{\beta} f(x) \right| \\
  & \leq \int \left|  K_{-1+\alpha} (x + u - y) - K_{-1 + \alpha}(x-y)) \right| \left| D^{\beta} f(y) \right| dy.
 \end{align*}
We employ the difference estimate
\begin{align*}
\left|  K_{-1+\alpha} (x + u) - K_{-1 + \alpha}(x)) \right| \lesssim_{\alpha} |x+u|^{-1+\alpha}  \ind_{|x+u| \leq |u|} & + |x|^{-1 + \alpha} \ind_{|x| \leq |u|} \\  + |u| \left| x + \frac{u}{2} \right|^{-2 + \alpha}& \ind_{\left| x + \frac{u}{2} \right| \geq |u|} 
\end{align*}
to conclude that
\begin{align*}
 \int & \left|  K_{-1+\alpha} (x + u - y) - K_{-1 + \alpha}(x-y)) \right|  \left| D^{\beta} f(y) \right| dy \\ & \lesssim_{\alpha,\beta} |u|^{\alpha} \left[ M D^{\beta} f(x+u) + M D^{\beta} f \left(x + \frac{u}{2} \right) + M D^{\beta} f(x) \right], 
 \end{align*}
where $M$ is still the Hardy-Littlewood maximal operator.  Consequently, when we apply $M_{\alpha}$ as defined in \eqref{smoothHLM} to $f^{(k)}$, we find
\[ | M_{\alpha} f^{(k)} (x)| \lesssim_{\alpha,\beta} M^2 D^{\beta} f(x). \]
Since $|| f||_p \approx_p || M f ||_p \approx_p ||M^2 f||_p$ for $p \in (1,\infty)$, we recover \eqref{compineq} in the non-integer case as well.  We note that the corresponding comparisons for the spaces ${{\mathscr L}^p_\beta}$ and ${\Lambda_\beta^{p,p}}$ mentioned in the introduction proceed similarly.

Finally, we give two representative examples of inequalities for the operators $M_{\alpha}$ which yield various embedding relationships among the spaces $X^p_{\beta}(\Omega)$ and weighted analogues. 

\begin{proposition}
Suppose $0 \leq \alpha \leq \alpha' \leq 1$.  Then for any measurable function $f$ on $\Omega$, we have
\[ M_{\alpha} f(x) \leq \ell(x)^{\alpha'-\alpha} M_{\alpha'} f(x). \]
\end{proposition}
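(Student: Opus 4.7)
The plan is to observe that $M_\alpha$ and $M_{\alpha'}$ differ only by the power of $\delta$ in the normalization and that the supremum is restricted to $\delta \in (0, \ell(x))$, so the ratio of the two integrands is exactly $\delta^{\alpha'-\alpha}$, which is bounded above by $\ell(x)^{\alpha'-\alpha}$ in the allowable range of $\delta$.

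More concretely, first I would fix an arbitrary $\delta \in (0, \ell(x))$ and rewrite the integrand in the definition of $M_\alpha f(x)$ by multiplying and dividing by $\delta^{\alpha'-\alpha}$, namely
\[ \frac{1}{\delta^{1+\alpha}} \int_{-\delta/2}^{\delta/2} |f(x+u) - f(x)|\, du = \delta^{\alpha'-\alpha} \cdot \frac{1}{\delta^{1+\alpha'}} \int_{-\delta/2}^{\delta/2} |f(x+u) - f(x)|\, du. \]
Next, since $\alpha' - \alpha \geq 0$ and $\delta < \ell(x)$, the factor $\delta^{\alpha'-\alpha}$ is pointwise bounded by $\ell(x)^{\alpha'-\alpha}$, and the second factor is pointwise bounded by $M_{\alpha'} f(x)$ by definition of the supremum. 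Finally, taking the supremum over $\delta \in (0, \ell(x))$ on the left and noting that $\ell(x)^{\alpha'-\alpha}$ is independent of $\delta$ yields the claimed inequality.

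There is essentially no obstacle here; the statement is a direct scaling/monotonicity consequence of the definition \eqref{smoothHLM}. The only point worth being careful about is that the supremum in $M_{\alpha'} f(x)$ is taken over the same range $\delta \in (0, \ell(x))$, so the pointwise bound on the second factor is legitimate for every $\delta$ in the range of the supremum defining $M_\alpha f(x)$.
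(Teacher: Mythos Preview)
Your proof is correct and follows essentially the same approach as the paper, which simply notes that the inequality is a trivial consequence of the restriction $\delta < \ell(x)$ in the supremum. You have merely written out the one-line argument in full detail.
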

\begin{proof} The proof is a trivial consequence of the fact that the supremum over $\delta$ only includes $\delta < \ell(x)$ at each $x$.
\end{proof}

 \begin{proposition}
 Suppose $p$ and $q$ are finite indices such that $1 \leq q < p$; let $\alpha \in (0,1)$ satisfy $\alpha + \frac{1}{p} = \frac{1}{q}$.  Then
 \[ || f ||_{p} \lesssim_{p,q} || \ell^{-\alpha} f ||_q + || M_{\alpha} f ||_q \]
 uniformly for all $f \in L^p(\Omega)$.
 \end{proposition}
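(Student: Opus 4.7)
The plan is to reduce the global inequality to a local Sobolev-Poincar\'e estimate on each interval of a Whitney-type partition of $\Omega$, and then to sum using that $p/q \geq 1$.  First, apply Lemma~\ref{fancypart} to $\ell$ to obtain a finitely-overlapping collection $\mathcal G$ of intervals covering $\{\ell > 0\}$, on each of which $\ell(x) \approx |I|$.  Let $\mathcal G_+$ denote those intervals contained in $\Omega$ and let $I^*$ denote $I$ together with its two $\mathcal G$-neighbors.  The target is the local bound
\[ \|f\|_{L^p(I)} \lesssim \|\ell^{-\alpha} f\|_{L^q(I^*)} + \|M_\alpha f\|_{L^q(I^*)} \]
for each $I \in \mathcal G_+$; the global inequality then follows by raising to the $p$-th power, summing, and using $\sum_I a_I^{p/q} \leq \bigl(\sum_I a_I\bigr)^{p/q}$ (valid for $p/q \geq 1$) together with the finite overlap of the $I^*$.

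For the local estimate, decompose $f = f_I + (f - f_I)$ with $f_I := |I|^{-1} \int_I f$.  By H\"older's inequality $|f_I| \leq |I|^{-1/q} \|f\|_{L^q(I)}$, so using $\ell \approx |I|$ on $I$ and $\alpha = 1/q - 1/p$,
\[ \|f_I\|_{L^p(I)} = |I|^{1/p} |f_I| \lesssim |I|^{1/p - 1/q} \|f\|_{L^q(I)} \approx \|\ell^{-\alpha} f\|_{L^q(I)}. \]
It remains to control $\|f - f_I\|_{L^p(I)}$, which after rescaling $I$ to $[0,1]$ becomes a fractional Poincar\'e-Sobolev estimate
\[ \|g - g_{[0,1]}\|_{L^p([0,1])} \lesssim \|M_\alpha g\|_{L^q([0,1])}, \qquad \alpha = \tfrac{1}{q} - \tfrac{1}{p}. \]

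To establish this, I plan to use a Hedberg-type chaining: for each Lebesgue point $x$, express $g(x) - g_{[0,1]}$ as a telescoping sum of averages $A_{r_k}(x) := |B(x,r_k) \cap [0,1]|^{-1} \int_{B(x,r_k) \cap [0,1]} g$ over a geometric sequence $r_k = 2^{-k}$.  The definition of $M_\alpha$ yields $|A_{r_{k+1}}(x) - A_{r_k}(x)| \lesssim r_k^{\alpha} M_\alpha g(x)$ whenever $2 r_k < \ell(x)$, which covers all but the first few large-scale terms; the few large-scale differences are controlled by the Hardy-Littlewood maximal function and absorbed into the $\|g\|_{L^q}$-type contribution.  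To obtain the sharp Sobolev exponent $p$, the resulting telescoping estimate must be integrated in scale and viewed as a Hardy-Littlewood-Sobolev fractional integral applied to a maximal function dominated by $M_\alpha g$, rather than merely summed pointwise.

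The main obstacle is the fractional Poincar\'e-Sobolev step.  A naive pointwise chaining only produces $\|g - g_{[0,1]}\|_{L^p} \lesssim \|M_\alpha g\|_{L^p}$, which is strictly weaker than what is needed since $q < p$.  Achieving the sharp exponent requires a more delicate analysis in which the telescoping sum is treated as an HLS-type fractional integral operator applied to an appropriate maximal quantity of $g$; additional care is also needed near the boundary of $[0,1]$, where the admissible range of $\delta$ in $M_\alpha$ is restricted by $\ell$ and the chaining scales must be adapted.
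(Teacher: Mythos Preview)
Your approach is genuinely different from the paper's, and the place where it remains incomplete is exactly where you flag it: the local fractional Poincar\'e--Sobolev estimate $\|g - g_{[0,1]}\|_{L^p} \lesssim \|M_\alpha g\|_{L^q}$.  The HLS mechanism you propose does not engage as written.  The telescoping bound $|A_{r_{k+1}}(x) - A_{r_k}(x)| \lesssim r_k^\alpha M_\alpha g(x)$ is purely pointwise in $x$; summing the geometric series just returns $|g(x) - A_1(x)| \lesssim M_\alpha g(x)$, with no convolution kernel against which to invoke HLS.  To produce a fractional integral you would first need a Haj\l{}asz-type inequality $|g(x) - g(y)| \lesssim |x-y|^{\alpha}\bigl(M_\alpha g(x) + M_\alpha g(y)\bigr)$ for a.e.\ pairs, and then run a Sobolev embedding for Haj\l{}asz spaces (or equivalently bound $g(x) - g_I$ by $I_\alpha(M_\alpha g)(x)$).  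That step is itself the content of the proposition, so your outline has pushed the difficulty around rather than resolved it.  The boundary issue you mention (small $\ell$ restricting the range of $\delta$ in $M_\alpha$) is by contrast harmless, since on $I \in \mathcal G$ you have $\ell \approx |I|$ uniformly.

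The paper's argument avoids the local Poincar\'e--Sobolev inequality entirely by exploiting the \emph{a priori} hypothesis $f \in L^p(\Omega)$, which your scheme never uses.  At each Lebesgue point $x$ one considers $g(\delta) := \delta^{-1}\int_{-\delta/2}^{\delta/2}|f(x+u)|\,du$ and $h(\delta) := \delta^{-1}\int_{-\delta/2}^{\delta/2}|f(x+u)-f(x)|\,du$.  Either $g(\ell(x)) \geq \tfrac12|f(x)|$, or by continuity there exists $\delta < \ell(x)$ with $h(\delta) = \tfrac12|f(x)|$ and hence $g(\delta) \geq \tfrac12|f(x)|$; in either case $|f(x)| \leq 2 g(\delta)^{1-\theta} h(\delta)^{\theta}$ for a well-chosen $\delta$.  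Bounding $g(\delta) \leq \delta^{-1/p}\|f\|_p$ and $h(\delta) \leq \delta^{\alpha} M_\alpha f(x)$ and choosing $\theta = (1+\alpha p)^{-1}$ yields
\[
|f(x)| \lesssim \|f\|_p^{1-\theta}\bigl( (\ell(x))^{-\alpha}|f(x)| + M_\alpha f(x)\bigr)^{\theta}.
\]
Taking $L^p$-norms (noting $\theta p = q$) produces $\|f\|_p \lesssim \|f\|_p^{1-\theta}\bigl(\|\ell^{-\alpha} f\|_q + \|M_\alpha f\|_q\bigr)^{\theta}$, and dividing through by the finite quantity $\|f\|_p^{1-\theta}$ finishes.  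This Hedberg-style self-improvement is the idea your proposal is missing; it replaces the hard local embedding with a soft interpolation against the assumed $L^p$ norm.
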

 \begin{proof}
 This proof is essentially the same as the recent proof of an abstract Sobolev embedding theorem proved by the second author \cite{gressman2013II}.  Fix any $x$ and consider the functions
 \[ g(\delta) := \frac{1}{\delta} \int_{-\frac{\delta}{2}}^{\frac{\delta}{2}} |f(x+u)| du \mbox{ and } h(\delta) := \frac{1}{\delta} \int_{-\frac{\delta}{2}}^{\frac{\delta}{2}} |f(x+u)-f(x)| du \]
 If $g(\ell(x)) \geq \frac{1}{2} |f(x)|$, then trivially
 \[ |f(x)| \leq 2^{1-\theta} \left( \frac{1}{\ell(x)} \int_{-\frac{\ell(x)}{2}}^{\frac{\ell(x)}{2}} |f(x+u)| du \right)^{1-\theta} |f(x)|^{\theta} \]
 for any $\theta \in [0,1]$.
 Otherwise, $g(\ell(x))  < \frac{1}{2} |f(x)|$; since
 \[ |f(x)| \leq g(\delta) + h(\delta) \]
 we have $h(\ell(x)) > \frac{1}{2} |f(x)|$.  Because $h$ is continuous and tends to zero for almost every $x$ (i.e., the Lebesgue points of the function $f$) as $\delta \rightarrow 0^{+}$, there must be a $\delta$ at which $h(\delta) = \frac{1}{2} |f(x)|$, in which case $g(\delta) \geq \frac{1}{2} |f(x)|$ and so for this particular $\delta$ we have
 \[ |f(x)| \leq 2 \left( \frac{1}{\delta} \int_{-\frac{\delta}{2}}^{\frac{\delta}{2}} |f(x+u)| du \right)^{1-\theta} \left( \frac{1}{\delta} \int_{-\frac{\delta}{2}}^{\frac{\delta}{2}} |f(x+u)- f(x)| du \right)^{\theta}. \]
 Now because $f$ is known to belong {\it a priori} to $L^p$, we have
 \[ \frac{1}{\delta} \int_{-\frac{\delta}{2}}^{\frac{\delta}{2}} |f(x+u)| du \leq \delta^{-\frac{1}{p}} ||f||_p. \]
 Thus for almost every $x$ we have
 \begin{align*}
  |f(x)| \leq  2  ||f||_p^{1-\theta} \left[ \vphantom{ \int_{-\frac{\delta}{2}}^{\frac{\delta}{2}}} \right. &  (\ell(x))^{-\frac{1-\theta}{p}} |f(x)|^{\theta}  
  \\ & \left. + \sup_{\delta < \ell(x)} \left( \frac{1}{\delta^{1+ \frac{1-\theta}{p \theta}}} \int_{-\frac{\delta}{2}}^{\frac{\delta}{2}} |f(x+u)- f(x)| du \right)^{\theta}  \right]. 
  \end{align*}
 Now set $\theta := \frac{1}{1 + \alpha p}$ Notice $\frac{1-\theta}{p \theta} = \alpha$ and $\theta p = q$. Taking $L^p$-norms both sides gives
 \[ ||f||_p \leq 2 ||f||_{p}^{1-\theta} \left( || \ell^{-\alpha} f ||_q^{\theta} + || M_{\alpha} f ||_q^{\theta} \right). \]
 Since $||f||_p < \infty$ is assumed, the proposition is finished after dividing by $||f||_{p}^{1-\theta}$ and raising the inequality to the power $\theta^{-1}$.
 \end{proof}

\bibliography{mybib}

\end{document}